\newtheorem{Thm}{Theorem}[section]
\newtheorem{Lem}[Thm]{Lemma}
\newtheorem{Prop}[Thm]{Proposition}
\newtheorem{Claim}[Thm]{Claim}
\newtheorem{proposition}[Thm]{Proposition}
\newtheorem{lemma}[Thm]{Lemma}
\theoremstyle{definition}
\newtheorem{Remark}[Thm]{Remark}
\theoremstyle{remark}
\def\ldots{\mathinner{\ldotp\ldotp\ldotp}}
\def\cdots{\mathinner{\cdotp\cdotp\cdotp}}
\newcommand{\nn}{\mathbb{N}}
\def \cal{\mathcal}
\def \diam{\text{diam }}
\def\eps{\varepsilon}
\def\Mdb{\mathbb M}
\def\Ndb{\mathbb N}
\def\Rdb{\mathbb R}
\def\supp{\text{supp}}
\newcommand{\bib}{\bibitem}
\def\supp{\text{supp }}
\begin{document}

\title{Prescribed Szlenk index of separable Banach spaces}

\author{R.M.~Causey}
\address{R.M.~Causey, Department of Mathematics, Department of Mathematics, Miami University, Oxford, OH 45056, USA}
\email{causeyrm@miamioh.edu}

\author{G.~Lancien}
\address{Laboratoire de Math\'ematiques de Besan\c con, Universit\'e Bourgogne Franche-Comt\'e, CNRS UMR-6623, 16 route de Gray, 25030 Besan\c con C\'edex, Besan\c con, France}
\email{gilles.lancien@univ-fcomte.fr}


\thanks{The second named author was supported by the French ``Investissements d'Avenir'' program, project ISITE-BFC, contract
 ANR-15-IDEX-03.}
\keywords{}
\subjclass[2010]{46B20}

\maketitle

\begin{abstract} In a previous work, the first named author described the  set $\cal P$ of all values of the Szlenk indices of separable Banach spaces. We complete this result by showing that for any integer $n$ and any ordinal $\alpha$ in $\cal P$, there exists a separable Banach space $X$ such that the Szlenk of the dual of order $k$ of $X$ is equal to the first infinite ordinal $\omega$ for all $k$ in $\{0,..,n-1\}$ and equal to $\alpha$ for $k=n$. One of the ingredients is to show that the Lindenstrauss space and its dual both have a Szlenk index equal to $\omega$. We also show that any element of $\cal P$ can be realized as a Szlenk index of a reflexive Banach space with an unconditional basis.

\end{abstract}

\section{Introduction and notation}

In this paper we exhibit some new properties of the Szlenk index, an ordinal index associated with a Banach space. More precisely we study the values that can be achieved as a Szlenk index of a Banach space and of its iterated duals. Let us first recall the definition of the Szlenk index.\\
Let $X$ be a Banach space, $K$ a weak$^*$-compact subset of its dual $X^*$ and $\eps>0$.
Then we define
$$s_\eps^1(K)=\{ x^* \in K,\ {\rm for\ any}\ {\rm weak^*-neighborhood}\ U\ {\rm of}\ x^*,\  \diam (K \cap U) \ge \varepsilon\}$$
and inductively the sets $s_\eps^\alpha(K)$ for $\alpha$ ordinal as follows: $s_\eps^{\alpha+1}(K)=s_\eps^1(s_\eps^\alpha(K))$ and $s_\eps^\alpha(K)=\bigcap_{\beta<\alpha}s_\eps^\beta(K)$ if $\alpha$ is a limit ordinal.\\
Then $Sz(K,\eps)=\inf\{\alpha,\ s^\alpha_\eps(K)=\emptyset\}$ if it exists and we denote $Sz(K,\eps)=\infty$ otherwise. Next we define $Sz(K)=\sup_{\eps>0}Sz(K,\eps)$.
The closed unit ball of $X^*$ is denoted $B_{X^*}$ and the Szlenk index of $X$ is $Sz(X)=Sz(B_{X^*})$.

The Szlenk index  was first introduced by W. Szlenk \cite{Szlenk1968}, in a slightly different form, in order to prove that there is no separable reflexive Banach space universal for the class of all separable reflexive Banach spaces. The key ingredients in \cite{Szlenk1968} are that the Szlenk index of a separable reflexive space is always countable and that for any countable ordinal $\alpha$, there exists a separable reflexive Banach space with Szlenk index larger than $\alpha$. It has been remarked in \cite{Lancien1996} that, when it is different from $\infty$, the Szlenk index of a Banach space is always of the form $\omega^\alpha$, for some ordinal $\alpha$. Here, $\omega$ denotes the first infinite ordinal. On the other hand, it follows from the work of Bessaga and Pe\l czy\'nski \cite{BessagaPelczynski1960} and Samuel \cite{Samuel1983} that if $K$ is an infinite, countable, compact topological space, then the Szlenk index of the space of continuous functions on $K$ is $\omega^{\alpha+1}$, where $\alpha$ is the unique countable ordinal such that $\omega^\alpha\le CB(K) <\omega^{\alpha+1}$ and  $CB(K)$ is the Cantor-Bendixson index of $K$. Finally, the set of all possible values for the Szlenk index of a Banach space was completely described in \cite{Causey2017a} (Theorem 1.5). One consequence of this general result is that for any countable ordinal $\alpha$, there exists an infinite dimensional separable Banach space $X$ with $Sz(X)=\alpha$ if and only if
$\alpha\in \Gamma\setminus \Lambda$, where
$$\Gamma=\{\omega^\xi,\ \xi  \in [1,\omega_1)\}\ \  \text{and} \ \ \Lambda=\{\omega^{\omega^\xi},\ \xi \in [1,\omega_1)\ \text{ and}\ \xi \text{\ is  a limit ordinal}\}.$$

Our first result shows that there is quite some freedom in prescribing the Szlenk indices of the iterated duals of a separable Banach space. We shall use the notation $Z^{(n)}$ for the $n^{\rm th}$ dual of a Banach space $Z$. Then our statement is the following.

\begin{Thm}\label{main}Let $n\in \Ndb$ and $\alpha \in \Gamma\setminus \Lambda$. Then there exists a separable Banach space $Z_n$ such that for all $k\in \{0,..,n-1\}$, $Sz(Z_n^{(k)})=\omega$ and $Sz(Z_n^{(n)})=\alpha$.
\end{Thm}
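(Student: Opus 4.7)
The natural strategy is induction on $n$, with base case $n=0$ being exactly Theorem 1.5 of \cite{Causey2017a}, which furnishes a separable Banach space $Z_0$ with $Sz(Z_0)=\alpha$ for any $\alpha\in\Gamma\setminus\Lambda$. For the inductive step, the plan is to construct $Z_n$ from $Z_{n-1}$ by a ``predualization'' operation that shifts the whole sequence of iterated-dual Szlenk indices up one level: one seeks a separable space $Z_n$ with $Sz(Z_n)=\omega$ whose iterated duals satisfy $Sz(Z_n^{(k)})=Sz(Z_{n-1}^{(k-1)})$ for $k\geq 1$. Composed $n$ times with the base case, this immediately delivers the desired profile of Szlenk indices.

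To carry this out, I would isolate a construction $X\mapsto\widetilde X$ on separable Banach spaces producing, for any input $X$, an output $\widetilde X$ satisfying $Sz(\widetilde X)=\omega$ together with a ``shift'' relation $Sz(\widetilde X^{(k+1)})=Sz(X^{(k)})$ for $k\geq 0$. The abstract signals that this is built on the Lindenstrauss space $L$, a specific non-reflexive separable Banach space with $Sz(L)=Sz(L^*)=\omega$, whose identification as a Szlenk-$\omega$ space on both sides of the duality is flagged as the key ingredient. The map $X\mapsto \widetilde X$ will likely be a Lindenstrauss-style amalgamation or twisted sum of (copies of) $L$ with $X$, arranged so that $\widetilde X$ looks ``locally Lindenstrauss'' at the levels of $\widetilde X$ and $\widetilde X^*$ (forcing Szlenk index $\omega$ there) but carries $X$'s iterated dual structure from the bidual onward.

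The main technical obstacle will be the Szlenk index computation. Establishing $Sz(\widetilde X)=\omega$ exactly (rather than merely being finite or $\leq\omega^{\omega}$) will demand a careful analysis of the Szlenk derivation on weak$^*$-compact subsets of $\widetilde X^*$, leveraging the structure inherited from $L$; the lemma that both $L$ and $L^*$ have Szlenk index $\omega$ enters as the base case of this analysis. The lower bound $Sz(Z_n^{(n)})\geq\alpha$ should follow from an isomorphic embedding of $Z_{n-1}$ (or, in one shot, of $Z_0$) into $Z_n^{(n)}$ together with the monotonicity of the Szlenk index under subspaces and quotients. What I expect to be the sharpest point is ensuring that every \emph{intermediate} iterated dual $Z_n^{(k)}$ for $1\leq k\leq n-1$ has Szlenk index exactly $\omega$, not larger: this forces the Lindenstrauss-type structure to be preserved under repeated duality through $n$ layers, which is precisely why the dual pair of Szlenk-$\omega$ computations for $L$ and $L^*$ is the crucial ingredient flagged in the abstract.
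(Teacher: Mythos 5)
Your overall shape (induction plus a Lindenstrauss-type shifting device, with $Sz(L)=Sz(L^*)=\omega$ as the key lemma) matches the paper, but the shift you posit does not exist as stated, and the actual quantitative mechanism is missing. The Lindenstrauss construction attaches $X$ two duals up, not one: it produces $Z$ with $Z^{**}/Z$ isomorphic to $X$ and $Sz(Z)=Sz(Z^*)=\omega$. A one-level ``predualization'' with $Sz(\widetilde{X}^{(k+1)})=Sz(X^{(k)})$ is not available in general (a generic separable $X$ is not a dual space, and nothing in the construction places $X$'s Szlenk data at the first dual level --- indeed your own phrase ``carries $X$'s iterated dual structure from the bidual onward'' contradicts the one-level shift relation you wrote down). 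Consequently the induction must step down by two: $Z_n$ is the Lindenstrauss space over $Z_{n-2}$, and $Z_1$ has to be produced separately, as $Z_2^*$. Composing your one-level shift $n$ times starting from $Z_0$ would, with the construction actually available, land $\alpha$ at level $2n$ rather than at level $n$.

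The second gap is the upper bounds. You address the lower bound $Sz(Z_n^{(n)})\ge\alpha$ and flag the intermediate levels as delicate, but you give no mechanism for $Sz(Z_n^{(n)})\le\alpha$ or for $Sz(Z_n^{**})=\omega$ in the inductive step. The paper gets these from two specific facts: (a) the product estimate of Brooker--Lancien applied to the exact sequence $0\to Z\to Z^{**}\to X\to 0$, namely $Sz(Z^{**},\eps)\le Sz(Z^{**}/Z,\eps/C)\,Sz(Z,\eps/C)$, which yields $Sz(Z^{**})\le\alpha$ at the bottom of the induction and (as a three-space property for Szlenk index $\omega$) $Sz(Z^{**})=\omega$ when the quotient itself has index $\omega$; and (b) the splitting of the higher duals, $Z^{(k)}\cong Z^{(k-2)}\oplus X^{(k-2)}$ for $k\ge 3$, so that $Sz(Z^{(k)})=\max\{Sz(Z^{(k-2)}),Sz(X^{(k-2)})\}$. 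Without (a) and (b) --- or substitutes for them --- the ``careful analysis of the Szlenk derivation'' you defer to has no concrete content, and neither the intermediate-level nor the top-level computation can be closed.
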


The above result relies on a  statement that has its own interest. Let us first recall that in \cite{Lindenstrauss1971}, J. Lindenstrauss constructed, for any separable Banach space $X$, a Banach space $Z$ such that $Z^{**}/Z$ is isomorphic to $X$. We prove the following.

\begin{Thm}\label{SzlenkofZZ*} For any separable Banach space $X$, the associated Lindenstrauss space $Z$ satisfies the following property: $Sz(Z)=Sz(Z^*)=\omega$.
\end{Thm}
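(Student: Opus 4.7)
The plan is to first recall the explicit Lindenstrauss construction and then deduce both Szlenk bounds from an $\ell_2$-type additivity inequality built into the norm. Fix a sequence $(x_n)$ dense in $B_X$. The space $Z$ is the completion of $c_{00}$ under the norm
$$\Big\|\sum_i a_i e_i\Big\|_Z=\sup\Big(\sum_{k=1}^m\Big\|\sum_{i\in I_k}a_i x_i\Big\|_X^2\Big)^{1/2},$$
the supremum being taken over finite families $I_1<\dots<I_m$ of consecutive integer intervals. The canonical basis $(e_n)$ is a shrinking monotone basis of $Z$, the biorthogonal sequence $(e_n^*)$ is a boundedly complete basis of $Z^*$, and $Z^{**}$ may be identified with the space of scalar sequences $(a_i)$ having bounded partial sums in $Z$, equipped with $\|(a_i)\|_{Z^{**}}=\sup_N\|\sum_{i\le N}a_i e_i\|_Z$; the quotient $Z^{**}/Z$ is isomorphic to $X$.

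To obtain $Sz(Z)=\omega$ I would establish the following $\ell_2$-lower estimate on $Z^*$: there is a function $c(\cdot)>0$ such that whenever $f\in Z^*$ and $(f_n)\subset Z^*$ is $w^*$-null with $\liminf_n\|f_n\|\ge\eps$, one has $\liminf_n\|f+f_n\|_{Z^*}^2\ge\|f\|^2+c(\eps)$. Since $(e_n^*)$ is a basis of $Z^*$, after extraction the $f_n$ are small perturbations of blocks whose supports escape to infinity, and the duality of the sup-of-partitions norm on $Z$ yields the desired $\ell_2$-type superadditivity for disjointly supported elements of $Z^*$. This inequality then propagates through the Szlenk derivation: if $g\in s_\eps^{n+1}(B_{Z^*})$, then $g$ is a $w^*$-limit of a sequence $(g_k)$ in $s_\eps^n(B_{Z^*})$ with $\|g_k-g\|\ge\eps/2$, so $\|g_k\|^2\ge\|g\|^2+c(\eps/2)$. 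Induction gives $s_\eps^n(B_{Z^*})\subset\{f:\|f\|^2\le 1-nc(\eps/2)\}$, which is empty once $n>c(\eps/2)^{-1}$; hence $Sz(Z)\le\omega$, and infinite-dimensionality gives equality.

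The same scheme, now executed in $Z^{**}$, yields $Sz(Z^*)=\omega$. Since $(e_n^*)$ spans $Z^*$, on bounded subsets of $Z^{**}$ the $w^*$-topology coincides with coordinatewise convergence, so a bounded $w^*$-null sequence $(F_n)\subset Z^{**}$ has each coordinate tending to $0$; after extraction its mass concentrates in progressively distant index blocks. Given $F\in B_{Z^{**}}$ and $\delta>0$, pick $N$ so that the truncation $F_N=\sum_{i\le N}a_ie_i$ satisfies $\|F_N\|_Z>\|F\|_{Z^{**}}-\delta$ and fix a partition of $[1,N]$ almost witnessing $\|F_N\|_Z$. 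Extracting so that $F_n$ has tiny coefficients on $[1,N]$ and that most of $\|F_n\|$ is captured by some block $J_n$ past $N$, and concatenating the two associated partitions, the sup-over-partitions definition of $\|\cdot\|_{Z^{**}}$ forces $\liminf_n\|F+F_n\|_{Z^{**}}^2\ge\|F\|^2+c(\eps)-o(1)$. The same iteration as above then empties $s_\eps^n(B_{Z^{**}})$ for $n$ sufficiently large, giving $Sz(Z^*)=\omega$.

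The main technical obstacle lies in the second argument: for $F\in Z^{**}\setminus Z$ the tail $F-F_N$ does not vanish in norm, so the appended block $J_n$ contains a genuine contribution from $F$ alongside $F_n$, and the naive disjoint-support additivity is unavailable. Overcoming this requires choosing the subsequence so that the effective support of $F_n$ lies far enough past $N$ for the tail of $F$ in that block to be absorbed into the $o(1)$ error, and verifying that the $\ell_2$-superadditivity inherent in the Lindenstrauss norm survives this passage to the bidual.
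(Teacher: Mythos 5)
Your argument is built on a misidentification of the space $Z$, and this is fatal rather than cosmetic. The completion of $c_{00}$ under the norm $\sup\bigl(\sum_k\|\sum_{i\in I_k}a_ix_i\|_X^2\bigr)^{1/2}$ is not the Lindenstrauss space; it is the auxiliary space $E$ in which the construction takes place. Because $(x_i)$ is dense in the unit sphere, the map $Q:E\to X$, $Q(a)=\sum_i a_ix_i$, is a metric surjection, and the Lindenstrauss space is $Z=\ker Q$, a closed subspace of $E$ with no distinguished basis; one then has $E/Z\cong X$ and $Z^{**}/Z\cong X$. For the space you actually work with, the theorem is false: $X$ is a quotient of $E$, so $Sz(E)\ge Sz(X)$, which exceeds $\omega$ for general separable $X$. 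Relatedly, the canonical basis $(e_n)$ of $E$ is boundedly complete but \emph{not} shrinking (it is shrinking only when $X=\{0\}$, since $E=Y^*$ and $E^*=Y^{**}=\widehat{Y}\oplus Q^*(X^*)$ with $\widehat{Y}=\overline{\mathrm{span}}(e_n^*)$), so your reduction of $w^*$-null sequences in $E^*$ to blocks of $(e_n^*)$ is unavailable; moreover $E^{**}/E\cong X^{**}$, not $X$. Indeed, the $\ell_2$-lower estimate you claim for $w^*$-null sequences in $E^*$ cannot hold, as it would force $Sz(E)=\omega$.

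The $\ell_2$-superadditivity you invoke is real, but it lives on the other side of the duality and proves the \emph{other} half of the theorem. Gap-separated, disjointly supported vectors of $E$ itself satisfy $\|a+b\|_E^2\ge\|a\|_E^2+\|b\|_E^2$; since $E=Y^*$ for a space $Y$ with shrinking basis and $Z^*\cong Y^{**}/Q^*(X^*)\cong Y$, this shows $E$ is $2$-weak$^*$-AUC, hence $Y$ and therefore $Z^*$ are $2$-AUS, giving $Sz(Z^*)=\omega$. The genuinely delicate half, $Sz(Z)=\omega$, is exactly where membership in $\ker Q$ must enter: for skipped blocks $a^1,\ldots,a^N$ in $E$ one only has
\begin{equation*}
\Bigl\|\sum_{k=1}^N a^k\Bigr\|_E\le \sum_{k=1}^N\|Q(a^k)\|_X+2\Bigl(\sum_{k=1}^N\|a^k\|_E^2\Bigr)^{1/2},
\end{equation*}
and the first term is negligible precisely when the blocks approximate elements of $Z$. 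This yields upper $\ell_2$ tree estimates for weakly null trees in $B_Z$, after which one concludes by the renorming theorem of Causey (or Knaust--Odell--Schlumprecht). Without the condition $Q(a^k)\approx 0$ the upper estimate fails badly: blocks supported on indices $i$ for which the $x_i$ cluster at a single point of $S_X$ generate $\ell_1$-type lower estimates in $E$. This is another way of seeing that no argument of the type you propose can succeed for $E$ in place of $\ker Q$.
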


Theorem \ref{SzlenkofZZ*} and then Theorem \ref{main} are proved in section 2. In section 3, we show the following refinement of Theorem 1.5 from \cite{Causey2017a}.

\begin{Thm}\label{Causey2} For any $\alpha \in \Gamma\setminus \Lambda$ there exists a separable reflexive Banach space $G_\alpha$ with an unconditional basis such that $Sz(G_\alpha)=\alpha$ and $Sz(G_\alpha^*)=\omega$.
\end{Thm}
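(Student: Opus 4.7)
The plan is to refine the construction from Theorem 1.5 of [Causey2017a] so that the resulting space is reflexive, admits an unconditional basis, and has dual Szlenk index $\omega$. The basic building blocks will be Tsirelson-type spaces $T_\beta = T[\mathcal{S}_\beta, \theta]$ built from Schreier families $\mathcal{S}_\beta$ of appropriate order with a fixed contraction $\theta \in (0,1)$. Such spaces are reflexive, carry a canonical $1$-unconditional basis, and are lower $\ell_1$-asymptotic of an order controlled by $\mathcal{S}_\beta$, which forces the dual to be asymptotically $c_0$ and therefore $Sz(T_\beta^*) = \omega$. The Szlenk indices of these building blocks realise all the successor-exponent values $\omega^{\beta+1}$ for $\beta < \omega_1$. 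The second building block will be the $\ell_2$-sum operation, which preserves reflexivity and unconditionality, and which will be used to reach limit-exponent values in $\Gamma\setminus\Lambda$.

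For $\alpha = \omega^{\beta+1}$ we set $G_\alpha = T_\beta$ with the Schreier family of the appropriate order. For the remaining cases, $\alpha = \omega^\xi$ with $\xi$ a countable limit ordinal that — since $\alpha\in\Gamma\setminus\Lambda$ — is not of the form $\omega^\eta$ for $\eta$ a limit, we pick a strictly increasing sequence of successor ordinals $\beta_n+1 \nearrow \xi$ and set
$$G_\alpha := \Bigl(\sum_{n=1}^{\infty} G_{\omega^{\beta_n+1}}\Bigr)_{\ell_2}.$$
This $\ell_2$-sum inherits reflexivity and an unconditional basis (by concatenating the bases of the summands), and a standard Szlenk calculation for $\ell_p$-sums yields
$$Sz(G_\alpha) = \sup_n \omega^{\beta_n+1} = \omega^\xi = \alpha.$$

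The main obstacle is to maintain $Sz(G_\alpha^*) = \omega$ throughout the induction. In the limit-exponent case we have $G_\alpha^* = \bigl(\sum_n G_{\omega^{\beta_n+1}}^*\bigr)_{\ell_2}$ with each summand having Szlenk index $\omega$ by induction. For the $\ell_2$-sum to still have Szlenk $\omega$, a uniform bound $\sup_n Sz(G_{\omega^{\beta_n+1}}^*, \eps) < \omega$ is needed for every $\eps > 0$; otherwise the sum could jump to $\omega^2$. This uniformity amounts to a common asymptotic $c_0$ modulus across all the dual summands, equivalently a common lower $\ell_1$-tree modulus for the primary summands. I expect this to be achievable by using the \emph{same} contraction $\theta$ in every Tsirelson-type building block, since with $\theta$ fixed the dual's asymptotic $c_0$ modulus depends only on $\theta$ and not on the order of the Schreier family. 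Care must then be taken that this uniformity is preserved under iterated $\ell_2$-sums, but this should follow because $\ell_2$-sums of asymptotically uniformly smooth Banach spaces sharing a common modulus are again asymptotically uniformly smooth with the same modulus.
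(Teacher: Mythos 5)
There is a genuine gap at the very first step: the building blocks do not have the Szlenk indices you assign to them. For a Tsirelson-type space $T_\beta=T[\mathcal{S}_\beta,\theta]$ the implicit equation defining the norm forces the lower $\ell_1$-estimates to iterate: admissible decompositions can be nested $k$ times, producing normalized weakly null trees indexed by $\mathcal{S}_\beta[\mathcal{S}_\beta[\cdots]]$ (of Cantor--Bendixson index roughly $\omega^{\beta k}$) with lower $\ell_1$-constant $\theta^k$, for every $k$. Consequently $Sz(T_\beta)=\omega^{\beta\cdot\omega}$, not $\omega^{\beta+1}$ (already for the classical Tsirelson space, $Sz(T)=\omega^\omega$, not $\omega^2$; see Alspach--Judd--Odell). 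So your basic blocks never realize any successor-exponent value $\omega^{\beta+1}$, and the values they do realize, $\omega^{\beta\omega}$, form only a thin subfamily of the limit-exponent part of $\Gamma\setminus\Lambda$; the whole induction therefore never gets off the ground. This is precisely why the paper does \emph{not} use an implicit Tsirelson norm: it uses a ``flat'' Schreier-type norming set $M=\bigcup_n\{2^{-n}\sum_{i\in E}e_i^*:E\in\mathcal{F}_n\}$ with geometric damping $2^{-n}$, so that for each $\eps>0$ only finitely many levels $\mathcal{F}_n$ contribute to $Sz(\cdot,\eps)$ and the index can be pinned down to exactly $\alpha$; reflexivity, unconditionality and $Sz(G_\alpha^*)=\omega$ are then obtained by a separate $\ell_2$-asymptotic biorthogonalization $G_\alpha=\mathfrak{G}_\alpha^{\ell_2}$, together with a nontrivial argument (Proposition \ref{tech9}) that this operation does not increase the Szlenk index. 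Replacing your Tsirelson blocks by plain Schreier spaces $X_{\mathcal{S}_\beta}$ would fix the index ($\omega^{\beta+1}$) but destroy reflexivity, so some such second step is unavoidable.

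A secondary issue: your computation $Sz\bigl((\sum_n X_n)_{\ell_2}\bigr)=\sup_n Sz(X_n)$ is not automatic. By Brooker's results the $\ell_2$-sum can have Szlenk index as large as $\omega^{\xi+1}$ when $\sup_n Sz(X_n)=\omega^\xi$, unless one has uniform control of $Sz(X_n,\eps)$ in $n$ for each fixed $\eps$. You correctly flag this for the dual sum, but the same uniformity must be established for the primal sum $G_\alpha$ itself, and the proposed remedy (``a common modulus because $\theta$ is fixed'') is not substantiated and in any case cannot rescue the argument once the building blocks have the wrong indices.
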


We conclude this introduction by recalling the definitions of some uniform asymptotic properties of norms that we will use. For a Banach space $(X,\|\ \|)$ we
denote by $B_X$ the closed unit ball of $X$ and by $S_X$ its unit
sphere. The following definitions are due to V. Milman \cite{Milman} and we follow the notation from \cite{JohnsonLindenstraussPreissSchechtman2002}. For $t\in [0,\infty)$, $x\in S_X$ and $Y$ a closed linear subspace of $X$, we define
$$\overline{\rho}_X(t,x,Y)=\sup_{y\in S_Y}\big(\|x+t y\|-1\big)\ \ \ \ {\rm and}\ \ \
\ \overline{\delta}_X(t,x,Y)=\inf_{y\in S_Y}\big(\|x+t y\|-1\big).$$ Then
$$\overline{\rho}_X(t,x)=\inf_{{\rm
dim}(X/Y)<\infty}\overline{\rho}_X(t,x,Y)\ \ \ \ {\rm and}\ \ \
\ \overline{\delta}_X(t,x)=\sup_{{\rm
dim}(X/Y)<\infty}\overline{\delta}_X(t,x,Y).$$
Finally
$$\overline{\rho}_X(t)=\sup_{x\in S_X}\overline{\rho}_X(t,x)\ \ \ \ {\rm and}\ \ \ \
\overline{\delta}_X(t)=\inf_{x\in S_X}\overline{\delta}_X(t,x).$$
The norm $\|\ \|$ is said to be
{\it asymptotically uniformly smooth} (in short AUS) if
$$\lim_{t \to 0}\frac{\overline{\rho}_X(t)}{t}=0.$$
It is said to be {\it asymptotically uniformly convex} (in short
AUC) if $$\forall t>0 \ \ \ \ \overline{\delta}_X(t)>0.$$
Let $p\in (1,\infty)$ and $q\in [1,\infty)$.\\
We say that the norm of $X$ is {\it $p$-AUS} if there exists $c>0$ such that for all $t\in [0,\infty)$, $\overline{\rho}_X(t)\le ct^p$.\\
We say that the norm of $X$ is {\it $q$-AUC} if there exits $c>0$ such that for all $t \in [0,1]$, $\overline{\delta}_X(t)\ge ct^q$.\\
Similarly, there is on $X^*$ a modulus of weak$^*$
asymptotic uniform convexity defined by
$$ \overline{\delta}_X^*(t)=\inf_{x^*\in S_{X^*}}\sup_{E}\inf_{y^*\in S_E}\big(\|x^*+ty^*\|-1\big),$$
where $E$ runs through all weak$^*$-closed subspaces of
$X^*$ of finite codimension. The norm of $X^*$ is said to be {\it weak$^*$ uniformly asymptotically convex} (in short weak$^*$-AUC) if $\overline{\delta}_X^*(t)>0$ for all $t$ in $(0,\infty)$. If there exists $c>0$ and $q\in [1,\infty)$ such that for all $t\in [0,1]$ $\overline{\delta}_X^*(t)\ge ct^q$, we say that the norm of $X^*$ is $q$-weak$^*$-AUC.

\medskip We will need the following classical duality result concerning these moduli (see for instance \cite{DKLR2017} Corollary 2.3 for a precise statement).

\begin{Prop}\label{duality} Let $X$ be a Banach space.\\
Then $\|\ \|_X$ is AUS if and and only if $\|\ \|_{X^*}$ is weak$^*$-AUC.\\
If $p,q\in (1,\infty)$ are conjugate exponents, then $\|\ \|_X$ is $p$-AUS if and and only if $\|\ \|_{X^*}$ is $q$-weak$^*$-AUC.
\end{Prop}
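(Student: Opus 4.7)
The approach is to establish a Young--Fenchel type relation between the modulus $\overline{\rho}_X$ and the weak$^*$-modulus $\overline{\delta}^*_X$, namely (possibly up to multiplicative constants depending on the normalization in the definitions)
$$\overline{\rho}_X(\tau)=\sup_{\sigma\ge 0}\bigl(\sigma \tau -\overline{\delta}^*_X(\sigma)\bigr)\qquad (\ast).$$
Granting $(\ast)$, both assertions reduce to standard facts about Legendre transforms: the qualitative statement $\lim_{\tau\to 0}\overline{\rho}_X(\tau)/\tau=0$ holds if and only if $\overline{\delta}^*_X(\sigma)>0$ for every $\sigma>0$, and $\overline{\rho}_X(\tau)\le c\tau^p$ on $[0,1]$ is equivalent by Young's inequality to $\overline{\delta}^*_X(\sigma)\ge c'\sigma^q$ on $[0,1]$, where $c'$ depends only on $c$ and the conjugate pair $(p,q)$. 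Thus the proof would really reduce to proving $(\ast)$.

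The plan for $(\ast)$ has two parts, each a Hahn--Banach/norming argument. For the inequality $\overline{\rho}_X(\tau)\le \sup_\sigma(\sigma\tau-\overline{\delta}^*_X(\sigma))$, I fix $x\in S_X$ and, for a given finite-codimensional $Y\subseteq X$, pick $y\in S_Y$ nearly realizing $\overline{\rho}_X(\tau,x,Y)$ together with a norming $z^*\in S_{X^*}$ of $x+\tau y$. Setting $E=Y^\perp$ (a weak$^*$-closed finite-codimensional subspace of $X^*$ by the bipolar theorem) and splitting off the component of $z^*$ along $E$, the identity $\|x+\tau y\|-1=z^*(x+\tau y)-1$ expresses the primal gain as a pairing controlled by $\overline{\delta}^*_X(\sigma,z^*,E)$ for an appropriate $\sigma=\sigma(\tau)$. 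Conversely, given $x^*\in S_{X^*}$, a weak$^*$-closed finite-codimensional $E=Y^\perp$, and a near-minimizer $y^*\in S_E$ of $\overline{\delta}^*_X(\sigma,x^*,E)$, I pair $x^*+\sigma y^*$ with an element $x+\tau y$ where $x$ nearly norms $x^*$ and $y\in S_Y$ is produced by Hahn--Banach so as to nearly norm $y^*$ on $Y$; the inequality
$$\|x^*+\sigma y^*\|\ge \frac{(x^*+\sigma y^*)(x+\tau y)}{\|x+\tau y\|}$$
combined with the bound $\|x+\tau y\|\le 1+\overline{\rho}_X(\tau)$ gives the reverse inequality after rearrangement and letting the various approximation errors tend to zero.

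The main technical obstacle is the asymmetric structure of the two definitions, with an $\inf$ over norm-closed $Y\subseteq X$ on one side and a $\sup$ over weak$^*$-closed $E\subseteq X^*$ on the other. The bipolar theorem handles this exactly, because weak$^*$-closed finite-codimensional subspaces of $X^*$ are precisely annihilators $Y^\perp$ of finite-codimensional $Y\subseteq X$; this is the place where the use of \emph{weak$^*$}-closed subspaces in the definition of $\overline{\delta}^*_X$ is essential. A second subtlety is that in the first direction the norming functional $z^*$ need not satisfy $z^*(x)=1$, only $z^*(x+\tau y)=\|x+\tau y\|\le 1+\overline{\rho}_X(\tau)$, so one must argue that for small $\tau$ the component of $z^*$ transverse to $E$ has norm close to $1$, which is a standard Bishop--Phelps style perturbation. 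Once these two points are in place, the rest of the argument is a bookkeeping of the various $\eps$'s, and the quantitative conclusion is extracted from $(\ast)$ by the Young inequality as indicated above.
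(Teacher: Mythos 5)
The paper does not actually prove this proposition: it is quoted as a classical fact with a pointer to \cite{DKLR2017} (Corollary 2.3), and your overall strategy --- an exact Young--Fenchel relation $(\ast)$ between $\overline{\rho}_X$ and $\overline{\delta}_X^*$, from which both assertions follow by elementary manipulations with conjugate exponents --- is indeed the strategy of that reference. However, your sketch has a concrete error at precisely the point you single out as ``the main technical obstacle.'' If $Y\subseteq X$ is finite-codimensional, then $Y^\perp$ is a finite-\emph{dimensional} (weak$^*$-closed) subspace of $X^*$, not a finite-codimensional one; the weak$^*$-closed finite-codimensional subspaces of $X^*$ are exactly the annihilators $F^\perp$ of finite-\emph{dimensional} subspaces $F\subseteq X$. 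So the identification $E=Y^\perp$ is not available, and the subspaces appearing in the definitions of $\overline{\rho}_X$ and of $\overline{\delta}_X^*$ do not correspond to one another under the bipolar theorem.

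This breaks both halves of the argument as written. In your second direction, a vector $y^*\in S_E$ with $E=Y^\perp$ vanishes identically on $Y$, so no $y\in S_Y$ can ``nearly norm $y^*$ on $Y$,'' and the crucial cross term $\sigma\tau\,y^*(y)$ in the pairing $(x^*+\sigma y^*)(x+\tau y)$ is zero. The correct argument must choose the two subspaces independently and then reconcile them: given $x^*\in S_{X^*}$ and a nearly norming $x\in S_X$, one picks $Y$ finite-codimensional nearly realizing $\overline{\rho}_X(\tau,x)$, and then a finite-dimensional $F\subseteq X$ containing $x$ which nearly norms the finite-dimensional space $Y^\perp$; for $y^*\in S_{F^\perp}$ one has $y^*(x)=0$ but must still bound $\|y^*\restricted_Y\|=d(y^*,Y^\perp)$ from below, and this is the genuinely delicate step (the naive estimate only gives $\|y^*\restricted_Y\|\gtrsim 1/2$, which suffices for the qualitative equivalence up to constants but not for the exact formula $(\ast)$). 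A symmetric repair is needed in the first direction, where the norming functional of $x+\tau y$ must be decomposed relative to an $F^\perp$ rather than to $Y^\perp$. The reduction of the proposition to $(\ast)$ and the use of Young's inequality are fine; it is the proof of $(\ast)$ that has to be redone with the correct duality of subspaces, as in Section 2 of \cite{DKLR2017}.
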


Finally let us recall the following fundamental result, due to Knaust, Odell and Schlumprecht \cite{KnaustOdellSchlumprecht1999}, which relates the existence of equivalent asymptotically uniformly smooth norms and the Szlenk index.
\begin{Thm}[Knaust-Odell-Schlumprecht]\ \\
 Let $X$ be a separable infinite dimensional Banach space. Then $X$ admits an equivalent norm which is asymptotically uniformly smooth if and only if $Sz(X)=\omega$.
\end{Thm}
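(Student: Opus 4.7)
The theorem is a biconditional and the two directions call for different techniques, so the plan splits in two.

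Forward direction: assuming $X$ is already equipped with an equivalent AUS norm, Proposition~\ref{duality} gives that $\|\cdot\|_{X^*}$ is weak$^*$-AUC, so $\overline{\delta}_X^*(t)>0$ for every $t>0$. The heart is the following quantitative slicing lemma: for each $\eps>0$ there is a factor $\theta(\eps)\in(0,1)$ such that
$$s^1_\eps(B_{X^*})\subseteq \theta(\eps)\,B_{X^*}.$$
This is proved by taking $x^*$ in the left-hand side, observing that in every weak$^*$-closed finite-codimensional subspace $E\subseteq X^*$ one can find $z^*\in E$ with $\|z^*\|\geq \eps/3$ and $x^*+z^*\in B_{X^*}$, and then applying the definition of $\overline{\delta}_X^*$ at $x^*/\|x^*\|$ to push $\|x^*\|$ strictly away from $1$. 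Using the rescaling identity $s^1_\eps(rK)=r\,s^1_{\eps/r}(K)$ and iterating, one obtains $s^n_\eps(B_{X^*})\subseteq \theta(\eps)^n B_{X^*}$, whose diameter falls below $\eps$ for $n$ large, so the next derivation empties it. Hence $Sz(X,\eps)<\omega$ for every $\eps>0$, giving $Sz(X)\leq\omega$; the reverse inequality follows since $X$ is infinite-dimensional.

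Reverse direction: assume $Sz(X)=\omega$, so $m_n:=Sz(X,2^{-n})<\omega$ for each $n$. The plan is to manufacture an equivalent weak$^*$-lower semicontinuous norm on $X^*$ that is weak$^*$-AUC, and then to apply Proposition~\ref{duality} in reverse. For each $n$, form the decreasing chain of weak$^*$-compact, absolutely convex sets
$$C_k^{(n)}:=\overline{\mathrm{co}}^{w^*}\bigl(s^k_{2^{-n}}(B_{X^*})\cup (-s^k_{2^{-n}}(B_{X^*}))\bigr),\qquad k=0,\ldots,m_n,$$
with $C_{m_n}^{(n)}=\emptyset$. Intuitively, an element of $C_{k-1}^{(n)}\setminus C_k^{(n)}$ inhabits a well-defined asymptotic layer at scale $2^{-n}$. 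I would then take the new dual ball to be a nested intersection of Minkowski combinations
$$B:=\bigcap_{n\geq 1}\Bigl(\sum_{k=0}^{m_n-1}\lambda_{n,k}\,C_k^{(n)}\Bigr),$$
with positive weights $\lambda_{n,k}$ tuned so that the gauge $\|\cdot\|_B$ is equivalent to $\|\cdot\|_{X^*}$ and a weak$^*$-perturbation of norm $\geq 2^{-n}$ costs a definite fraction of gauge-norm coming from the $n$-th factor.

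The main obstacle is precisely this tuning. The Szlenk derivation is intrinsically a weak$^*$-topological operation and interacts only indirectly with the linear structure, so converting the mere termination $C_{m_n}^{(n)}=\emptyset$ into a uniform weak$^*$-AUC modulus demands a delicate matching of scales $2^{-n}$, multiplicities $m_n$, and weights $\lambda_{n,k}$: the weights must decay fast enough in $n$ for weak$^*$-compactness, summability, and norm-equivalence, yet slow enough to preserve at each scale the contraction produced by one Szlenk derivation. A clean way to organize the bookkeeping, following the line of Knaust-Odell-Schlumprecht and later Godefroy-Kalton-Lancien, is first to reduce to the case where $X$ has a shrinking finite-dimensional decomposition (via an embedding result preserving $Sz(X)=\omega$), and then to realize the construction on blocks of the FDD, where compactness and coordinate decay trivialize summability and the finiteness of $m_n$ translates directly into coordinate-wise control. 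Dualizing the resulting weak$^*$-AUC norm on $X^*$ through Proposition~\ref{duality} yields the desired AUS renorming on $X$.
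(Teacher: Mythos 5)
This statement is quoted in the paper as a known result with a citation to Knaust--Odell--Schlumprecht; the paper gives no proof of it, so there is nothing internal to compare your argument to. Judged on its own, your forward direction is correct and standard: from weak$^*$-AUC of the dual norm one gets $s^1_\eps(B_{X^*})\subseteq\theta(\eps)B_{X^*}$ with $\theta(\eps)\le (1+\overline{\delta}^*_X(\eps/3))^{-1}<1$, and the homogeneity of the derivation plus monotonicity in $\eps$ gives $s^n_\eps(B_{X^*})\subseteq\theta(\eps)^nB_{X^*}$, hence $Sz(X,\eps)<\omega$. That half is fine.

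The reverse direction, which is where the entire content of the theorem lies, is not proved. Two concrete problems. First, the convexification step $C^{(n)}_k=\overline{\mathrm{co}}^{w^*}\bigl(s^k_{2^{-n}}(B_{X^*})\cup(-s^k_{2^{-n}}(B_{X^*}))\bigr)$ destroys the defining property of the derivation: a point of $C^{(n)}_{k-1}\setminus C^{(n)}_k$ is a weak$^*$ limit of convex combinations drawn from all layers, and there is no reason that an $\eps$-oscillating weak$^*$ net at such a point must fall into $C^{(n)}_k$. Controlling Szlenk derivations of weak$^*$-closed convex hulls is itself a nontrivial theorem (it is Theorem 1.1 of the convex-hull paper cited here as \cite{Causey2017a}), and even that result is an ordinal estimate, not the fixed-$\eps$ layer-by-layer statement your construction needs. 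Second, the ``tuning of the weights $\lambda_{n,k}$'' that you flag as the main obstacle and then defer to ``following the line of Knaust--Odell--Schlumprecht'' is not bookkeeping --- it is the theorem. The genuine missing ideas are the uniformization of the qualitative hypothesis ($Sz(X,\eps)<\omega$ for each $\eps$) into a single modulus, classically obtained from the submultiplicative estimate $Sz(X,\eps\delta)\le Sz(X,\eps)\,Sz(X,\delta)$ which forces power-type decay $Sz(X,\eps)\le C\eps^{-p}$; the embedding/quotient reduction to a space with a shrinking FDD preserving $Sz=\omega$; and the infinite Ramsey-type stabilization of weakly null trees that lets one define a Tsirelson-style equivalent norm on blocks of the FDD. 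As written, your plan invokes the proof it is supposed to supply, so the biconditional is only established in one direction.
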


\section{Prescribed Szlenk index of iterated duals}

\subsection{Renormings of the Lindenstraus space and of its dual}\

We recall the construction given by J. Lindenstrauss in \cite{Lindenstrauss1971} (see also \cite{LindTzafbook1} Theorem 1.d.3) and introduce notation that will be used throughout this section. We refer the reader to the textbooks \cite{LindTzafbook1} and \cite{AlbiacKaltonbook} for a presentation of the standard notions of a Schauder, shrinking, boundedly complete or unconditional basis of a Banach space.

Let $(X,\|\ \|_X)$ be a separable Banach space. Assume $X\neq \{0\}$ and fix $(x_i)_{i=1}^\infty$, a dense sequence in the unit sphere $S_X$ of $X$. Let $E$ be defined by
$$E=\Big\{a=(a_i)_{i=1}^\infty \in \Rdb^{\Ndb},\ \ \|a\|_E=\sup_{0=p_0<p_1<..<p_k}\Big(\sum_{j=1}^k\big\|\sum_{i=p_{j-1}+1}^{p_j} a_ix_i\big\|^2_X\Big)^{1/2}<\infty\Big\}.$$
Then $(E,\|\ \|_E)$ is a Banach space. Let us denote by $(e_i)_{i=1}^\infty$ the canonical algebraic basis of $c_{00}$, the space of finitely supported real valued sequences. It is clear that $(e_i)_{i=1}^\infty$ is a boundedly complete basis of $E$. It follows that $E$ is isometric to the dual $Y^*$ of a Banach space $Y$ with a shrinking basis. If $(e_i^*)_{i=1}^\infty$ is the sequence of coordinate functionals associated with the basis $(e_i)_{i=1}^\infty$ of $E$, then the canonical image of $Y$ in its bidual $Y^{**}$ is the closed linear span of $\{e_i^*,\ i\ge 1\}$ and $(e_i^*)_{i=1}^\infty$ can be seen as a shrinking basis of $Y$.\\
Note now that if $a=(a_i)_{i=1}^\infty \in E$, then the series $\sum_{i=1}^\infty a_ix_i$ is converging in $X$. It is important to note that the density of $(x_i)_{i=1}^\infty$ in $S_X$ implies that the map $Q:E\to X$, defined by $Q(a)=\sum_{i=1}^\infty a_ix_i$ is linear, onto, satisfies $\|Q\|=1$ and also that the open mapping constant of $Q$ is one. Consequently, we have that $Q^*$ is an isometry from $X^*$ into $Y^{**}$. The main result of \cite{Lindenstrauss1971} is that $Y^{**}=\widehat{Y}\oplus Q^*(X^*)$, where $\widehat{Y}$ is the canonical image of $Y$ in $Y^{**}$, and the projection from $Y^{**}$ onto $Q^{*}(X^*)$ with kernel $\widehat{Y}$ has norm one. In particular, $Y$ is isomorphic to the quotient space $Y^{**}/Q^*(X^*)$.\\
Now let $Z$ denote the kernel of $Q$. The space $Z$ is a subspace of $E=Y^*$ and its orthogonal $Z^\perp$ is clearly equal to $Q^*(X^*)$. It follows from the classical duality theory that $Z^*$ is isometric to $Y^{**}/Q^*(X^*)$ and therefore isomorphic to $Y$. If $I$ is the inclusion map from $Z$ into $Y^*$ and $J_Y$ is the canonical injection from $Y$ into $Y^{**}$, an isomorphism from $Y$ onto $Z^*$ is given by $T=I^*J_Y$. Finally, if $J_Z$ is the canonical injection from $Z$ into $Z^{**}$, it is easy to check that $T^*J_Z=Id_Z$. It follows immediately that $Z^{**}/J_Z(Z)$ (or simply $Z^{**}/Z$) is isomorphic to $Y^*/Z$ and therefore to $X$.

\medskip The purpose of this subsection is to prove Theorem \ref{SzlenkofZZ*}. In fact, our result is stronger.

\begin{Thm}\label{2AUSZZ*} For any separable Banach space $X$, the associated Lindenstrauss space $Z$ satisfies the following properties.\\
(i) The space $Z^*$ admits an equivalent norm which is 2-AUS.\\
(ii) The space $Z$ admits an equivalent norm which is 2-AUS.
\end{Thm}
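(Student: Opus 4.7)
For part (i), my plan is to show that $(E, \|\cdot\|_E)$ is $2$-weak$^*$-AUC and then invoke Proposition~\ref{duality}. The key inequality: for $f \in E$ supported on $[1,N]$ in the basis $(e_i)$ and $g \in E$ supported on $[N+1, \infty)$, any partition of $\N$ containing $N$ as a breakpoint separates the contributions of $f$ and $g$, giving
\[
\|f + tg\|_E^2 \;\geq\; \|f\|_E^2 + t^2\|g\|_E^2.
\]
Arbitrary $f \in E$ is handled by approximating with $f_N = \sum_{i \le N} f_i e_i$ using the Schauder basis property of $(e_i)$. The tail subspaces $V_N = \overline{\mathrm{span}}\{e_i : i > N\}$ are weak$^*$-closed of finite codimension in $E = Y^*$, being annihilators of $\mathrm{span}\{e_i^* : i \le N\} \subset Y$. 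This yields $\bar\delta_E^*(t) \geq ct^2$ for some $c>0$. Proposition~\ref{duality} then delivers an equivalent $2$-AUS norm on $Y \cong Z^*$.

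For part (ii), the situation is more delicate because the inherited norm $\|\cdot\|_E|_Z$ is itself not $2$-AUS. Indeed, with $X = \R$ and $x_i \equiv 1$, the elements $z = e_1 + e_2 - 2e_3$ and $w = -e_{N+1} - e_{N+2} + 2e_{N+3}$ both lie in $Z$, yet the sign-aligned straddling partition $\{[1,2],[3,N+1],\{N+2\},\{N+3\}\}$ contributes the block value $(2+t)^2$ and so produces a term linear in $t$ in $\|z+tw\|_E^2$. Therefore a genuine renorming of $Z$ is needed.

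My plan is to construct a shrinking FDD $(F_k)$ on $Z$ and use the associated $\ell_2$-sum renorming. By arranging the dense sequence $(x_i)$ so that $(x_i)_{i \in I_k}$ is $\eps_k$-dense in $S_X$ for a blocking $(I_k)$ of $\N$ and a summable sequence $\eps_k$, the open mapping property of $Q$ (with constant $1$) allows every $z \in Z$ to be approximated by finite sums of elements of $F_k := Z \cap \overline{\mathrm{span}}\{e_i : i \in I_k\}$, yielding an FDD. Equip $Z$ with
\[
\tnorm{z}_Z^2 \;=\; \sum_k \|z_k\|_E^2, \qquad z = \sum_k z_k, \; z_k \in F_k.
\]
Provided $\tnorm{\cdot}_Z$ is equivalent to $\|\cdot\|_E|_Z$, the $2$-AUS property follows by a direct estimate: given $z \in Z$ with $\tnorm{z}_Z = 1$ and $t > 0$, choose $K$ with $\sum_{k>K}\|z_k\|_E^2 < t^2$, and for $w$ in the closed span of $(F_k)_{k>K}$ with $\tnorm{w}_Z = 1$, Minkowski in each FDD block together with Cauchy--Schwarz yields $\tnorm{z+tw}_Z^2 \le 1 + 3t^2$.

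The main obstacle is establishing the equivalence $\tnorm{\cdot}_Z \asymp \|\cdot\|_E|_Z$. The inequality $\tnorm{z}_Z \le \|z\|_E$ is immediate from considering the partition of $\N$ into the $I_k$ and refining optimally inside each. The reverse $\|z\|_E \le C\tnorm{z}_Z$ is the delicate direction: in general, $\|\cdot\|_E$ can be inflated by ``coherent long-block'' partitions (witness $\sum_{i=1}^M e_i$ having norm $M$ but $\ell_2$-norm $\sqrt{M}$ in the James-type picture), so without exploiting the constraint defining $Z$ the two norms are \emph{not} equivalent. The rescue is that each block $F_k$ consists of elements with $Q(z_k) = 0$, which eliminates the coherent inflation across FDD blocks. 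Making this quantitative --- in particular, choosing the blocking $(I_k)$ large enough that any ``long-block'' contribution to a partition of $\N$ can be bounded by the FDD-respecting quantity $\sum_k\|z_k\|_E^2$, using the cancellation $Q(z_k)=0$ --- is the technical heart of the proof.
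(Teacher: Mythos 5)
Your part (i) follows the paper's own route: the support‑splitting inequality $\|f+tg\|_E^2\ge\|f\|_E^2+t^2\|g\|_E^2$ shows $\|\cdot\|_E$ is $2$-weak$^*$-AUC on $E=Y^*$, the tail subspaces are weak$^*$-closed and finite codimensional, and Proposition \ref{duality} transfers this to a $2$-AUS norm on $Y\cong Z^*$. That is correct. Your preliminary observation for (ii) — that the inherited norm on $Z$ is not AUS, so a genuine renorming is unavoidable — is also correct and well illustrated by your example.

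The gap is in the construction for (ii): the subspaces $F_k=Z\cap\overline{\mathrm{span}}\{e_i:i\in I_k\}$ do not form an FDD of $Z$, so $\tnorm{z}_Z^2=\sum_k\|z_k\|_E^2$ is simply not defined on all of $Z$, and the question of equivalence of norms (which you flag as the technical heart) never even arises. Indeed, since each coordinate projection $P_{I_k}$ is bounded on $E$ and each $F_k$ is closed, the closed linear span of $\bigcup_k F_k$ is contained in the closed set $\{z: Q(P_{I_k}z)=0 \text{ for all }k\}$. But a typical element of $Z$ violates this: pick $i\in I_1$ and (by density of $(x_l)$ in $S_X$) some $l\in I_m$, $m\ge 2$, with $\|x_i-x_l\|_X$ small; then $e_i-e_l$ can be corrected by a vector of small $E$-norm to an element $z\in Z$, and $Q(P_{I_1}z)\approx x_i\neq 0$. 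So $\overline{\mathrm{span}}\bigcup_k F_k$ is a proper subspace of $Z$, for every blocking; the correction vectors needed to push $P_{I_k}z$ into $Z$ must be supported in later blocks, which destroys the direct-sum structure. What does survive of your idea is a skipped-block statement, and that is exactly the paper's route: any branch of a weakly null tree in $B_Z$ can be perturbed into successive skipped blocks $a^k$ of $(e_i)$ with $\|Q(a^k)\|_X$ small, and for such blocks one proves $\|\sum_k a^k\|_E\le\sum_k\|Q(a^k)\|_X+2\bigl(\sum_k\|a^k\|_E^2\bigr)^{1/2}$ — the condition $Q(a^k)\approx 0$ killing precisely the coherent long-block inflation you identify. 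This gives upper $\ell_2$ tree estimates for $Z$, and the paper then invokes Theorem 1.1 of \cite{Causey2017c} (upper $\ell_2$ tree estimates imply an equivalent $2$-AUS norm) rather than building the norm by hand. Without that renorming theorem, or an actual decomposition of $Z$ (which your $F_k$ do not provide), your argument does not close.
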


We start with the proof of the easy part (i) which can be precisely stated as follows.

\begin{Prop}\label{SzlenkofZ*} The norm $\|\ \|_E$ is 2-weak$^*$-AUC on $Y^*=E$ and therefore $\|\ \|_Y$ is 2-AUS. In particular, $Z^*$ admits an equivalent norm which is 2-AUS, there exists $C>0$ such that for all $\eps >0$, $Sz(Z^*,\eps)\le C\eps^{-2}$, and $Sz(Y)=Sz(Z^*)=\omega$.
\end{Prop}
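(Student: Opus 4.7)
The plan is to reduce the proposition to establishing the 2-weak$^*$-AUC estimate on $\|\cdot\|_E$; everything else will then fall out routinely. Once $\|\cdot\|_E$ is shown to be 2-weak$^*$-AUC, Proposition \ref{duality} yields that $\|\cdot\|_Y$ is 2-AUS. Since $Z^*$ is isomorphic to $Y$ in the Lindenstrauss construction recalled above, this 2-AUS norm transports to an equivalent 2-AUS norm on $Z^*$. The bound $\Sz(Z^*,\eps)\le C\eps^{-2}$ is then a standard consequence of $p$-AUS renormability: the $\eps$-Szlenk derivation contracts the unit ball at a rate controlled by $\overline{\rho}_{Z^*}(t)$, yielding the polynomial bound. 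Hence $\Sz(Z^*)\le\omega$; infinite dimensionality gives the reverse inequality, and the same holds for $Y\simeq Z^*$.

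So the crux is the 2-weak$^*$-AUC estimate. Fix $a^*\in S_E$, $t\in(0,1]$, and $\eps>0$. Since the canonical basis $(e_i)$ is a Schauder basis of $E$, the projections $P_n(b)=\sum_{i\le n}b_ie_i$ satisfy $P_na^*\to a^*$ in $E$, so one can find $n_0$ with $\|a^*-P_{n_0}a^*\|_E<\eps$, giving $\|P_{n_0}a^*\|_E>1-\eps$. Select a partition $0=p_0<p_1<\cdots<p_k=n_0$ almost witnessing this norm, so that
$$\Big(\sum_{j=1}^k\Big\|\sum_{i=p_{j-1}+1}^{p_j}a^*_ix_i\Big\|_X^2\Big)^{1/2}\ge 1-2\eps.$$
Take as test subspace the weak$^*$-closed finite-codimensional $F=\{y^*\in E:y^*_i=0\text{ for }i\le n_0\}$. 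Any $y^*\in S_F$ vanishes on $[1,n_0]$, so a witnessing partition for $\|y^*\|_E\ge 1-\eps$ may be chosen with every nontrivial break exceeding $n_0$; shifting the starting point yields $n_0=q_0<q_1<\cdots<q_m$ with $\big(\sum_j\|\sum_{i=q_{j-1}+1}^{q_j}y^*_ix_i\|_X^2\big)^{1/2}\ge 1-\eps$.

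Concatenate to obtain the admissible partition $0=p_0<\cdots<p_k=q_0<\cdots<q_m$ for $a^*+ty^*$, and set $I_j=(p_{j-1},p_j]$, $J_j=(q_{j-1},q_j]$, $u_j=\|\sum_{i\in J_j}a^*_ix_i\|_X$, $v_j=\|\sum_{i\in J_j}y^*_ix_i\|_X$. On each $I_j$, $y^*_i=0$; on each $J_j$, the reverse triangle inequality gives $\|\sum_{i\in J_j}(a^*_i+ty^*_i)x_i\|_X\ge|tv_j-u_j|$. Hence
$$\|a^*+ty^*\|_E^2\ge(1-2\eps)^2+\sum_{j=1}^m(tv_j-u_j)^2\ge(1-2\eps)^2+t^2\sum_{j=1}^mv_j^2-2t\sum_{j=1}^mu_jv_j.$$
The key observation is that the partition $q_\bullet$ (shifted to start at $0$) is also admissible for $a^*-P_{n_0}a^*$, so $(\sum_ju_j^2)^{1/2}\le\|a^*-P_{n_0}a^*\|_E<\eps$. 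Combining with $\sum_jv_j^2\ge(1-\eps)^2$ and Cauchy-Schwarz gives $\|a^*+ty^*\|_E^2\ge(1-2\eps)^2+t^2(1-\eps)^2-2t\eps$. Since $\eps>0$ was arbitrary, $\sup_F\inf_{y^*\in S_F}\|a^*+ty^*\|_E^2\ge 1+t^2$, whence $\overline{\delta}_E^*(t)\ge\sqrt{1+t^2}-1\ge t^2/3$ for $t\in[0,1]$.

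The main obstacle is that $a^*$ need not vanish beyond $n_0$, which prevents a clean $\ell_2$-orthogonal split across the concatenated partition; this is overcome by choosing $n_0$ so large that the tail $a^*-P_{n_0}a^*$ has $E$-norm at most $\eps$, so that the cross-term $2t\sum_ju_jv_j$ is absorbed by a single Cauchy-Schwarz. Everything after the 2-weak$^*$-AUC estimate — duality to 2-AUS for $Y$, transport to $Z^*$, the quantitative Szlenk bound $O(\eps^{-2})$, and the equalities $\Sz(Y)=\Sz(Z^*)=\omega$ — is immediate from Proposition \ref{duality} and the well-established principle that a $p$-AUS norm yields $\Sz(\cdot,\eps)=O(\eps^{-p})$.
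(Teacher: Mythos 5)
Your argument is correct and follows essentially the same route as the paper: both proofs obtain the $2$-weak$^*$-AUC estimate for $\|\cdot\|_E$ by concatenating a partition witnessing the norm of (a truncation of) $a^*$ with one witnessing the norm of the far-supported perturbation $y^*$. The only difference is cosmetic — the paper isolates this as a lemma for two disjointly supported vectors and disposes of the tail of $a^*$ by a triangle inequality in $E$, whereas you carry the tail through the block computation and absorb the cross term with Cauchy--Schwarz; both yield $\|a^*+ty^*\|_E^2\gtrsim 1+t^2$ and the remaining assertions follow from Proposition \ref{duality} and standard facts exactly as you say.
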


This result is an immediate consequence of the following elementary lemma.

\begin{Lem} Let $a,b \in E$ and assume that there exits $k\in \Ndb$ such that the sequence $a$ is supported in $[1,k]$ while the sequence $b$ is supported in $[k+3,\infty)$. Then
$$\|a+b\|_E^2\ge \|a\|_E^2+\|b\|_E^2.$$
\end{Lem}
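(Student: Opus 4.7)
The plan is to produce, for each $\epsilon>0$, a single admissible partition of $\Ndb$ which, when used to evaluate $\|a+b\|_E^2$, simultaneously recovers the sums witnessing $\|a\|_E^2-\epsilon$ and $\|b\|_E^2-\epsilon$. The two positions $k+1$ and $k+2$, on which both $a$ and $b$ vanish, provide precisely the room needed to splice near-optimal partitions for $a$ and $b$ while keeping all breakpoints strictly increasing.

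In more detail, first I would fix $\epsilon>0$ and choose a partition $0=p_0<p_1<\cdots<p_m$ realizing $\|a\|_E^2$ to within $\epsilon$. Since $a_i=0$ for $i>k$, enlarging the last block's right endpoint from $p_m$ to $k$ does not change its contribution, so one may assume $p_m=k$. Next I would choose a partition $0=q_0<q_1<\cdots<q_n$ realizing $\|b\|_E^2$ to within $\epsilon$. Because $b$ vanishes on $[1,k+2]$, every block contained in $[1,k+2]$ contributes zero; collapsing all such leading blocks into one first block leaves the total sum unchanged and produces a partition with $q_1\ge k+2$. Concatenating the two partitions and inserting the breakpoint $k+1$ between them yields the admissible partition
\[0=p_0<p_1<\cdots<p_m=k<k+1<q_1<q_2<\cdots<q_n.\]
Evaluating the corresponding sum on $a+b$: the blocks coming from the $a$-partition lie in $[1,k]$, outside the support of $b$, so they reproduce the $a$-sum exactly; the buffer block $\{k+1\}$ lies outside both supports and contributes $0$; the blocks coming from the $b$-partition lie in $[k+2,\infty)$, outside the support of $a$, and since $b_{k+2}=0$ the first of them reproduces its original $b$-contribution while the remaining ones do so automatically. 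This gives
\[\|a+b\|_E^2\ \ge\ (\|a\|_E^2-\epsilon)+(\|b\|_E^2-\epsilon),\]
and letting $\epsilon\to 0$ finishes the argument.

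The only point requiring care is the normalization of the two chosen partitions, particularly the reduction to $q_1\ge k+2$ in the partition for $b$. This relies only on the elementary observation that a block of indices disjoint from the support of $b$ contributes zero, so such blocks may be collapsed into one leading block with no effect on the total sum. Once this bookkeeping is in place, the rest of the proof is a routine splicing argument, and I do not anticipate any substantive obstacle.
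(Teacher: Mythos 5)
Your argument is correct and is essentially the paper's own proof: both splice a near-optimal partition for $a$ (supported in $[1,k]$) with a near-optimal partition for $b$ (supported in $[k+3,\infty)$), using the gap at $k+1,k+2$ as a buffer so that each block of the concatenated partition sees only one of the two vectors, and then let the error tend to $0$. The only cosmetic difference is that the paper ends the $a$-partition at $k+1$ and starts the $b$-partition there instead of inserting a separate singleton buffer block, and your normalization ``$q_1\ge k+2$'' should more precisely be achieved by also moving the first breakpoint up to $k+2$, which costs nothing since $b$ vanishes there.
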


\begin{proof} Since $a$ is supported in $[1,k]$ we can find a sequence $0=p_0<p_1<..<p_m=k+1$ such that
$$\|a\|_E^2=\sum_{j=1}^m\big\|\sum_{i=p_{j-1}+1}^{p_j} a_ix_i\big\|^2_X.$$
Fix $\eta >0$. Since  $b$ is supported in $[k+3,\infty)$ we can find a sequence $k+1=q_0<q_1<..<q_r$ such that
$$\|b\|_E^2 \ge \sum_{j=1}^r\big\|\sum_{i=q_{j-1}+1}^{q_j} b_ix_i\big\|^2_X - \eta.$$
Let $n_j=p_j$ for $j\in \{0,\ldots,m\}$ and $n_j=q_{j-m}$ for $m\le j\le m+r$. Then
$$\|a+b\|_E^2\ge \sum_{j=1}^{m+r}\big\|\sum_{i=n_{j-1}+1}^{n_j} (a+b)_ix_i\big\|^2_X \ge \|a\|_E^2+\|b\|_E^2-\eta.$$
This finishes the proof.
\end{proof}

We now turn to the proof of part (ii) in Theorem \ref{2AUSZZ*}, which will rely on the following technical lemma.

\begin{Lem}\label{blocks} Assume that $a^1,..,a^N$ are skipped blocks with respect to the basis $(e_i)_{i=1}^\infty$ of $E$, meaning that there exist $0=r_0<r_1<..<r_N$ so that
$$\forall k\in \{1,..,N\},\ \supp(a^k)\subset (r_{k-1},r_k)$$
and denote $\eps_k=\|\sum_{i=1}^\infty a^k_ix_i\|_X$. Then
$$\Big\|\sum_{k=1}^N a^k\Big\|_E \le \sum_{k=1}^N \eps_k+ 2\Big(\sum_{k=1}^N \big\|a^k\big\|_E^2\Big)^{1/2}.$$
\end{Lem}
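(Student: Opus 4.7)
The plan is to fix an arbitrary partition $0=p_0<p_1<\cdots<p_m$ and bound the corresponding block sums for $b:=\sum_{k=1}^N a^k$. Writing $c_{j,k}:=\sum_{i\in(p_{j-1},p_j]}a^k_i x_i\in X$, so that $c_j:=\sum_{i=p_{j-1}+1}^{p_j}b_i x_i=\sum_{k=1}^N c_{j,k}$, and setting $K_j:=\{k:c_{j,k}\ne 0\}$ and $J_k:=\{j:c_{j,k}\ne 0\}$, I would first observe that each $J_k$ is a contiguous interval of indices and that $|J_k|=1$ is equivalent to $\supp(a^k)\subset(p_{j-1},p_j]$ for the unique $j\in J_k$, in which case $\|c_{j,k}\|_X=\eps_k$. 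I then split $c_j=c_j^F+c_j^B$ where $c_j^F$ collects the $c_{j,k}$ with $|J_k|=1$ and $c_j^B$ collects those with $|J_k|\ge 2$.

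The ``full'' part is handled by the triangle inequality in $X$: $\sum_j\|c_j^F\|_X\le\sum_{k:|J_k|=1}\eps_k\le\sum_{k=1}^N\eps_k$, which already gives $(\sum_j\|c_j^F\|_X^2)^{1/2}\le\sum_{k=1}^N\eps_k$.

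The structural heart of the argument is the claim that for each fixed $j$ there are at most two $k\in K_j$ with $|J_k|\ge 2$. Using the skipped-block hypothesis, either some $k$ satisfies $(p_{j-1},p_j]\subset(r_{k-1},r_k)$ (a ``middle'' piece), in which case the pairwise disjointness of the intervals $(r_{k'-1},r_{k'})$ forces $K_j=\{k\}$, or no such $k$ exists and then at most one $a^k$ has its support ending inside $(p_{j-1},p_j]$ (a ``tail'') and at most one has its support beginning there (a ``head''); the other $k\in K_j$ are automatically ``full'' since their support is trapped inside $(p_{j-1},p_j]$. The elementary inequality $(a+b)^2\le 2(a^2+b^2)$ then yields $\|c_j^B\|_X^2\le 2\sum_{k\in K_j,\,|J_k|\ge 2}\|c_{j,k}\|_X^2$.

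To finish, for each $k$ with $|J_k|\ge 2$ the family $(c_{j,k})_{j\in J_k}$ corresponds to a partition of $\supp(a^k)$ obtained by intersecting with the $(p_{j-1},p_j]$, which is admissible in the supremum defining $\|a^k\|_E$; hence $\sum_{j\in J_k}\|c_{j,k}\|_X^2\le\|a^k\|_E^2$. Summing over $k$ gives $\sum_j\|c_j^B\|_X^2\le 2\sum_{k=1}^N\|a^k\|_E^2$, so $(\sum_j\|c_j^B\|_X^2)^{1/2}\le\sqrt 2\,(\sum_k\|a^k\|_E^2)^{1/2}$. Combining with the estimate on $c_j^F$ via Minkowski's inequality in $\ell^2$ yields $(\sum_j\|c_j\|_X^2)^{1/2}\le\sum_k\eps_k+\sqrt 2\,(\sum_k\|a^k\|_E^2)^{1/2}$, and taking the supremum over partitions closes the proof (with the slightly better constant $\sqrt 2<2$). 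The main obstacle will be the combinatorial case analysis that bounds the number of ``split'' pieces contributing to $c_j^B$ by two.
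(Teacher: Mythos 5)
Your proof is correct and takes essentially the same route as the paper's: fix a partition $(p_j)_{j=0}^m$, separate the blocks wholly contained in one cell $(p_{j-1},p_j]$ (controlled by $\sum_k\eps_k$ via the triangle inequality and $\ell^2\le\ell^1$) from those cut by the partition, and use the key combinatorial fact that each cell meets at most two cut blocks. Your cell-by-cell accounting of the cut pieces, via $(a+b)^2\le 2(a^2+b^2)$ followed by regrouping over $k$, is sound and even yields the slightly better constant $\sqrt{2}$ in place of the paper's $2$.
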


\begin{proof} Fix $0=p_0<..<p_m$ and assume without lost of generality that $p_m\ge r_N$. Then for $j\in \{1,..,m\}$ we denote
$$A_j=\{k\le N,\ (r_{k-1},r_k)\subset (p_{j-1},p_j]\},\ \ A=\bigcup_{j=1}^m A_j\ \ {\rm and}\ \ B=\{1,\cdots,m\}\setminus A.$$
We first estimate

\begin{align*}
&\Bigl(\sum_{j=1}^m\Big\|\sum_{i=p_{j-1}+1}^{p_j}\Big(\sum_{k\in A}a^k_i\Big)x_i\Big\|_X^2\Bigr)^{1/2}
\leq \sum_{j=1}^m \Bigl\|\sum_{i=p_{j-1}+1}^{p_j} \Bigl(\sum_{k\in A} a^k_i\Bigr)x_i\Bigr\|_X \\
&=\sum_{j=1}^m\Big\|\sum_{k\in A_j}\sum_{i=p_{j-1}+1}^{p_j}a^k_ix_i\Big\|_X \le \sum_{j=1}^m\sum_{k\in A_j}\big\|\sum_{i=p_{j-1}+1}^{p_j}a^k_ix_i\big\|_X
\end{align*}\label{eq1}
and we obtain

\begin{align}\label{eq1}
\Bigl(\sum_{j=1}^m\Big\|\sum_{i=p_{j-1}+1}^{p_j}\Big(\sum_{k\in A}a^k_i\Big)x_i\Big\|_X^2\Bigr)^{1/2}
\le \sum_{j=1}^m\sum_{k\in A_j}\eps_k
\le \sum_{k=1}^N \eps_k.
\end{align}
So we may assume that $B$ is not empty and enumerate $B=\{a^{k(1)},\ldots,a^{k(L)}\}$, with $k(1)<\cdots <k(L)$. Note that for $1\le l \le L$, $\supp(a_{k(l)})\subset (r_{k(l)-1},r_{k(l)})\subset (r_{k(l-1)},r_{k(l)})$ and $(r_{k(l-1)},r_{k(l)})$ is not included in any of the sets $(p_{j-1},p_j]$, for $1\le j\le m$. Then we define $i_0=0$ and for $1\le l\le L$, $i_l=\min\{i,\ p_i\ge r_{k(l)}\}$. From the definition of $B$, we get that $2<i_1<\cdots<i_L$ and for all $l\in \{1,..,L\}$, $p_{i_l-1}<r_{k(l)}\le p_{i_l}$. We can now write
$$\sum_{j=1}^m\Big\|\sum_{i=p_{j-1}+1}^{p_j}\Big(\sum_{k\in B}a^k_i\Big)x_i\Big\|_X^2=\sum_{q=1}^L\sum_{j=i_{q-1}+1}^{i_q}
\Big\|\sum_{i=p_{j-1}+1}^{p_j}\Big(\sum_{l=1}^L a^{k(l)}_i\Big)x_i\Big\|_X^2.$$
Using the convention $a^{k(0)}=0=a^{k(L+1)}$ and the properties of our various sequences we get
\begin{align*}
&\sum_{j=1}^m\Big\|\sum_{i=p_{j-1}+1}^{p_j}\Big(\sum_{k\in B}a^k_i\Big)x_i\Big\|_X^2
=\sum_{q=1}^L\sum_{j=i_{q-1}+1}^{i_q}
\Big\|\sum_{i=p_{j-1}+1}^{p_j}\big(a^{k(q)}_i+a^{k(q+1)}_i)x_i\Big\|_X^2\\
&\le \sum_{q=1}^L \big\|a^{k(q)}+a^{k(q+1)}\big\|^2_E\le 4\sum_{q=1}^L\big\|a^{k(q)}\big\|_E^2\le 4\sum_{k=1}^N \big\|a^k\big\|_E^2,
\end{align*}
which yields
\begin{equation}\label{eq2}
\Big(\sum_{j=1}^m\Big\|\sum_{i=p_{j-1}+1}^{p_j}\Big(\sum_{k\in B}a^k_i\Big)x_i\Big\|_X^2\Big)^{1/2} \le 2\Big(\sum_{k=1}^N \big\|a^k\big\|_E^2\Big)^{1/2}.
\end{equation}
The conclusion of the proof of this lemma now clearly follows from equations (\ref{eq1}) and (\ref{eq2}), a triangle inequality and by taking the supremum over all finite sequences $(p_j)_{j}$.
\end{proof}

Before we proceed with the proof of Theorem \ref{2AUSZZ*}, we need to introduce some notation. For an infinite subset $\Mdb$ of $\Ndb$, we denote $[\Mdb]^{<\omega}$ the set of void or finite  increasing sequences in $\Mdb$. The void sequence is denoted $\emptyset$. For $E\in [\Ndb]^{<\omega}$, we denote $|E|$, the \emph{length} of $E$, defined by $|E|=0$ if $E=\emptyset$ and $|E|=k$ if $E=(n_1,\ldots,n_k)$. For $F=(n_1,\ldots,n_l)$ in $[\Ndb]^{<\omega}$, we write $E \prec F$, if $E=\emptyset$ or $E=(n_1,\ldots,n_k)$, for some $k<l$, and we then say that $E$ is a proper initial segment of $F$. We write $E \preceq F$ if $E <F$ or $E=F$ and we then say that $E$ is an initial segment of $F$. For $E=(n_1,..,n_k)\in [\Ndb]^{<\omega}$ and $n\in \nn$ such that $n>n_k$, $(E,n)$ denotes the sequence $(n_1,..,n_k,n)$, while $(\emptyset,n)$ is $(n)$. For a Banach space $X$, we will call a family $(x_E)_{E\in [\Ndb]^{<\omega}}$ in $X$, a \emph{tree in $X$}. Then a family $(x_E)_{E\in [\nn]^{<\omega}}$ in a Banach space $X$ is said to be a \emph{weakly null tree} if for any $E$ in $[\nn]^{<\omega}$ the sequence $(x_{(E,n)})_{n}^\infty$ is weakly null. If $(x_E)_{E\in [\Ndb]^{<\omega}}$ is a tree in the Banach space $X$ and $\Mdb$ is an infinite subset of $\Ndb$, we call $(x_E)_{E\in [\Mdb]^{<\omega}}$ a \emph{refinement} or a \emph{full subtree} of $(x_E)_{E\in [\Ndb]^{<\omega}}$.

\begin{proof}[Proof of (ii) in Theorem \ref{2AUSZZ*}] Fix $(\eps_n)_{n=0}^\infty$ a sequence in $(0,\infty)$ such that $\sum_{n=0}^\infty \eps_n^2 \le \frac14$. Let $(z_F)_{F\in [\nn]^{<\omega}}$ be a weakly null tree in the unit ball $B_Z$ of $Z$. By extracting a full subtree, we may assume that there exist $0=r_0<r_1<\cdots<r_n<\cdots$ and for any $F\in [\Ndb]^{<\omega}\setminus \{\emptyset\}$ there exist $a^F\in B_E$ so that
$$\forall F=(n_1,\ldots,n_k)\in [\nn]^{<\omega}\setminus \{\emptyset\},\ \supp(a^F)\subset (r_{n_k-1},r_{n_k})\ \ \text{and}\ \  \|a^{F}-z_{F}\big\|_E\le \eps_k.$$
Since $(z_F)_{F\in [\nn]^{<\omega}}$ is included in the kernel of $Q$, the last condition implies that
$$\forall F\in [\Ndb]^{<\omega}\setminus \{\emptyset\},\ \ \big\|\sum_{i=1}^\infty a^{F}_ix_i\big\|_X\le \eps_k.$$
We can therefore apply Lemma \ref{blocks} and the triangle inequality to get that for all $(\lambda_F)_{F\in [\nn]^{<\omega}\setminus \{\emptyset\}}$ in $\Rdb$ and all $F\in [\nn]^{<\omega}\setminus \{\emptyset\}$,
$$\big\|\sum_{\emptyset<G\le F}\lambda_Gz_G\big\|_E\le 2\sum_{\emptyset<G\le F}|\lambda_G| \eps_{|G|} + 2 \Big(\sum_{\emptyset<G\le F} \lambda_G^2\Big)^{1/2}.$$
It then follows from our initial choice of the sequence $(\eps_n)_{n=0}^\infty$ and the Cauchy-Schwarz inequality that
$$\forall F\in [\nn]^{<\omega}\setminus \{\emptyset\},\ \ \ \big\|\sum_{\emptyset<G\le F}\lambda_Gz_G\big\|_E\le  3 \Big(\sum_{\emptyset<G\le F} \lambda_G^2\Big)^{1/2}.$$
In the terminology introduced in \cite{Causey2017c} it means that $Z$ satisfies $\ell_2$ upper tree estimates. It then follows from Theorem 1.1 in \cite{Causey2017c} that $Z$ admits an equivalent norm which is 2-AUS.
\end{proof}

\begin{Remark} Statement (i) in Theorem \ref{2AUSZZ*} can be rephrased as follows. The space $Z^*$ admits an equivalent norm whose dual norm is 2-weak$^*$-AUC. It is important to note that this norm cannot be a dual norm of an equivalent norm on $Z$. Indeed a bidual norm cannot be weak$^*$-AUC unless the space is reflexive (see proposition below). In particular, in Lindenstrauss' construction, the space $Y$ is isomorphic but never isometric to $Z^*$.
\end{Remark}

For the convenience of the reader, we state and prove the elementary fact from which the previous remark follows.

\begin{Prop}\label{nonreflexive}
Let $Z$ be a non reflexive Banach space. Then the norm of $Z^{**}$ is not weak$^*$-AUC.
\end{Prop}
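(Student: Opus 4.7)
The plan is to use the non-reflexivity of $Z$ to produce a norm-one element $z^{**}$ of $Z^{**}$ bounded away from the canonical image $J_Z(Z)$, and then to exhibit, for every finite-dimensional $F \subset Z^*$, a sequence in $S_{F^\perp}$ along which $\|z^{**} + t y_n^{**}\|$ tends to $1$, for $t$ sufficiently small. This forces $\overline{\delta}_{Z^*}^*(t) \le 0$ for some $t>0$, which is the negation of weak$^*$-AUC.

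Since $Z$ is non-reflexive, $J_Z(Z)$ is a proper closed subspace of $Z^{**}$, so one may pick $z^{**} \in S_{Z^{**}}$ with $d_0 := \dist(z^{**}, J_Z(Z)) > 0$. Fix $t \in (0, d_0/2)$ and an arbitrary finite-dimensional subspace $F = \mathrm{span}\{f_1, \ldots, f_k\} \subset Z^*$. By Hahn--Banach one chooses $\phi_1, \ldots, \phi_k \in Z^{**}$ biorthogonal to the $f_i$'s and sets $C_F = \sum_i \|\phi_i\|$. Goldstine's theorem then supplies $(z_n) \subset B_Z$ with $f_i(z_n) \to z^{**}(f_i)$ for each $i$; setting $w_n^{**} := z^{**} - J_Z(z_n)$, we have $w_n^{**}(f_i) \to 0$ and $d_0 \le \|w_n^{**}\| \le 2$.

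Biorthogonalize by defining $v_n := \sum_i w_n^{**}(f_i)\,\phi_i$, so that $u_n := w_n^{**} - v_n \in F^\perp$, $\|v_n\| \le C_F \max_i |w_n^{**}(f_i)| \to 0$, and hence $\|u_n\| \ge d_0/2 > t$ for $n$ large enough. With $y_n^{**} := -u_n/\|u_n\| \in S_{F^\perp}$, the rearrangement
$$z^{**} + t y_n^{**} = (1 - t/\|u_n\|)\, z^{**} + (t/\|u_n\|)\, J_Z(z_n) + (t/\|u_n\|)\, v_n,$$
together with $1 - t/\|u_n\| > 0$ and $\|z^{**}\|,\|J_Z(z_n)\| \le 1$, yields
$$\|z^{**} + t y_n^{**}\| - 1 \;\le\; (t/\|u_n\|)\,\|v_n\| \;\longrightarrow\; 0.$$
Hence $\inf_{y^{**} \in S_{F^\perp}}(\|z^{**} + t y^{**}\| - 1) \le 0$ for every such $F$; taking the supremum over $F$ gives $\overline{\delta}_{Z^*}^*(t) \le 0$, which is what we wanted.

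The one nontrivial point is the biorthogonalization step: Goldstine only provides approximate annihilation of $w_n^{**}$ on $F$, so one must upgrade this to exact membership in $F^\perp$ by subtracting the norm-null correction $v_n$ built from a fixed biorthogonal system. Everything else is a standard convex-combination estimate exploiting that $z^{**}$ and $J_Z(z_n)$ both lie in $B_{Z^{**}}$.
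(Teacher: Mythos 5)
Your proof is correct and follows essentially the same route as the paper's: pick $z^{**}\in S_{Z^{**}}$ at positive distance $d_0$ from $J_Z(Z)$, use Goldstine to approximate it on the finitely many functionals defining $E=F^\perp$, correct the difference so that it lands exactly in $E$, and exploit convexity of the ball to show $\inf_{y^{**}\in S_E}\|z^{**}+ty^{**}\|\le 1$ for $t<d_0/2$. Your explicit biorthogonal correction $v_n$ is just a hands-on substitute for the paper's quotient-duality identity $d(z^{**}-z,E)=\|z^{**}-z\|_{F^*}$, and the rest matches.
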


\begin{proof} Assume that the Banach space $Z$ is not reflexive. So, there exists $z^{**} \in S_{Z^{**}} \setminus Z$. Pick $\eps>0$ such that $\eps<d(z^{**},Z)$. Fix $\delta>0$ so that $\eps+\delta<d(z^{**},Z)$ and $E$ a weak$^*$-closed finite codimensional subspace of $Z^{**}$. We can write $E=\cap_{i=1}^n \text{Ker}\,z_i^*$, with $z_i^*\in Z^*$. Fix now $\eta >0$. Then, Goldstine's theorem insures that there exists $z\in B_Z$ such that $|(z^{**}-z)(z_i^*)|<\eta$ for all $i\le n$. If we denote $F$ the linear span of $z_1^*,\ldots,z_n^*$, it follows from elementary duality theory that
$$d(z^{**}-z,E)=\|z^{**}-z\|_{Z^{**}/F^\perp}=\|z^{**}-z\|_{F^*}.$$
So, if $\eta$ was chosen small enough, we get that $d(z^{**}-z,E)<\delta$. Thus we can pick $e^{**}\in E$ such that $\|z-z^{**}-e^{**}\|<\delta$. Note that it implies that $\|e^{**}\|>\eps$.\\
Now, writing $z=z^{**}+e^{**}+z-z^{**}-e^{**}$ and using the fact $z\in B_Z$, we deduce that $\|z^{**}+e^{**}\|\le 1+\delta$. Finally, by convexity, it follows that there exists $\lambda \in (0,1)$ so that $\|\lambda e^{**}\|=\eps$ and $\|z^{**}+\lambda e^{**}\|\le 1+\delta$.
Since $\delta$ could be chosen arbitrarily small, we deduce that for any weak$^*$-closed finite codimensional subspace $E$ of $Z^{**}$:
$$\inf_{y^{**}\in S_{E^{**}}} \|z^{**}+\eps y^{**}\|\le 1,$$
which implies that $\overline{\delta}_{Z^*}^*(\eps)=0$ and finishes our proof.
\end{proof}

\subsection{Proof of Theorem \ref{main}}\

\medskip
We now conclude this section with the proof of Theorem \ref{main}.

\begin{proof} We fix $\alpha \in \Gamma\setminus \Lambda$ and do an induction on $n\in \Ndb$.\\
For $n=2$, let $X_\alpha$ (given by Theorem 1.5 in \cite{Causey2017a}) be a separable Banach space such that $Sz(X_\alpha)=\alpha$. Then denote $Z_2$ the Lindenstrauss space such that $Z_2^{**}/Z_2$ is isomorphic to $X_\alpha$. We have, by Theorem \ref{SzlenkofZZ*} that $Sz(Z_2)=Sz(Z_2^*)=\omega$. Next, using Proposition 2.1 in \cite{BrookerLancien2013} we get that there exists $C>0$ such that
$$\forall \eps>0\ \ \ Sz(Z_2^{**},\eps)\le Sz(Z_2^{**}/Z_2,\frac{\eps}{C})Sz(Z_2,\frac{\eps}{C}) <\alpha.$$
The last inequality follows from the fact that $Sz(Z_2^{**}/Z_2,\frac{\eps}{C})<\alpha$, $Sz(Z_2,\eps)<\omega$ and elementary properties of the multiplication of ordinal numbers. We deduce that $Sz(Z_2^{**})$ is at most $\alpha$ and therefore $Sz(Z_2^{**})=\alpha$, since $Sz(Z_2^{**})\ge Sz(Z_2^{**}/Z_2)=Sz(X_\alpha)=\alpha$.\\
Then we can choose $Z_1=Z_2^*$.\\
Assume now that $n\ge 3$ and that spaces $Z_1,\ldots,Z_{n-1}$ have been constructed with the requisite indices of the duals. Then denote $Z_n$ the Lindenstrauss space such that $Z_n^{**}/Z_n$ is isomorphic to $Z_{n-2}$. We already know that $Sz(Z_n)=Sz(Z_n^*)=\omega$. Since $Sz(Z_{n-2})=\omega$, we can use the fact that having a Szlenk index equal to $\omega$ is a three space property (see \cite{BrookerLancien2013}) to deduce that $Sz(Z_n^{**})=\omega$. Then using elementary facts about duality, we have that for all $k\ge 3$ the space $Z_n^{(k)}$ is isomorphic to $Z_n^{(k-2)}\oplus Z_{n-2}^{(k-2)}$ which implies that  $Sz(Z^{(k)})=\max\{Sz(Z_n^{(k-2)}),Sz(Z_{n-2}^{(k-2)})\}$ (see \cite{Causey2017b}). It now clearly follows that $Sz(Z_n^{(k)})=\omega$ for all $k\in \{0,..,n-1\}$ and $Sz(Z_n^{(n)})=\alpha$.
\end{proof}

\section{Prescribing Szlenk indices of reflexive Banach spaces}

We now turn to the proof of Theorem \ref{Causey2}, which will take a few steps.

\medskip
First we describe a general construction of a Banach space associated with a given Banach space with a Schauder basis, which will be essential in the sequel. As it will be clear, this resembles Lindenstrauss' construction. The crucial difference is that the dense sequence $(x_i)_{i=1}^\infty$ in $X$ will be replaced by a normalized Schauder basis of $X$.

So assume that $(x_i)_{i=1}^\infty$ is a normalized Schauder basis for the Banach space $X$ and denote again $(e_i)_{i=1}^\infty$ the canonical algebraic basis of $c_{00}$. We define $X^{\ell_2}$ as the completion of $c_{00}$ with respect to the norm
$$\big\|\sum_{i=1}^\infty a_i e_i \big\|_{X^{\ell_2}}= \sup\Bigl\{\Bigl(\sum_{i=1}^\infty \big\|\sum_{j=k_{i-1}+1}^{k_i} a_jx_j\big\|_X^2\Bigr)^{1/2}:0\leq k_0<k_1<\ldots\Bigr\}.$$
This construction is presented in section 3 of \cite{OdellSchlumprechtZsak2007} in a more general setting. With the notation from \cite{OdellSchlumprechtZsak2007}, the space $X^{\ell_2}$ is $Z^V(E)$, with $Z=X$, $V=\ell_2$ and $E$ being the finite dimensional decomposition of $X$ into the one dimensional spaces spanned by the basis vectors $(x_i)_{i=1}^\infty$ of $X$. Clearly, the definition of $X^{\ell_2}$ depends on our choice of the basis $(x_i)_{i=1}^\infty$. However, we shall omit reference to this dependence in our notation.

Note first that $(e_i)_{i=1}^\infty$ is a basis for $X^{\ell_2}$ which is an unconditional basis for $X^{\ell_2}$ if $(x_i)_{i=1}^\infty$ is unconditional in $X$. Furthermore, the map $e_i \mapsto x_i$ extends to a well defined linear operator $I:X^{\ell_2}\to X$ of norm one.  Note also that $(e_i)_{i=1}^\infty$ is a bimonotone basis for $X^{\ell_2}$, even if $(x_i)_{i=1}^\infty$ is not bimonotone in $X$.

\begin{proposition}\label{OSZreflexive} Assume that $(x_i)_{i=1}^\infty$ is a shrinking basis of $X$. Then

(i) The space $X^{\ell_2}$ is reflexive. In particular, $(e_i)_{i=1}^\infty$ is a shrinking and boundedly complete basis of $X^{\ell_2}$.

(ii) The space $(X^{\ell_2})^*$ is $2$-AUS. In particular $Sz((X^{\ell_2})^*)=\omega$.
\end{proposition}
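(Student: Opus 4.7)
The whole argument hinges on an $\ell_{2}$-lower estimate for skipped blocks in $X^{\ell_{2}}$, exactly analogous to the basic lemma used in the proof of Proposition \ref{SzlenkofZ*}: if $u,v\in X^{\ell_{2}}$ satisfy $\supp(u)\subseteq[1,N]$ and $\supp(v)\subseteq[N+2,\infty)$ for some $N\in\Ndb$, then
$$\|u+v\|_{X^{\ell_{2}}}^{2}\ \geq\ \|u\|_{X^{\ell_{2}}}^{2}+\|v\|_{X^{\ell_{2}}}^{2}.$$
The proof is word-for-word the same: one picks partitions of $[0,N+1]$ and of $[N+1,\infty)$ that are $\eta$-optimal for $u$ and for $v$ respectively, concatenates them (the skipped index $N+1$ guarantees strict monotonicity) to form a single increasing partition admissible for $u+v$, and then lets $\eta\downarrow 0$.

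For (i), this inequality immediately yields bounded completeness of $(e_{i})$ by exactly the argument used to deduce Proposition \ref{SzlenkofZ*}: if the partial sums of $\sum a_{i}e_{i}$ are bounded by $M$ but the series fails to converge, one extracts a sequence of skipped blocks $(v_{k})$ with $\|v_{k}\|_{X^{\ell_{2}}}\geq\delta>0$, and iterating the skipped-blocks inequality forces $\|\sum_{k=1}^{K}v_{k}\|_{X^{\ell_{2}}}^{2}\geq K\delta^{2}$, contradicting $\|\sum_{k=1}^{K}v_{k}\|_{X^{\ell_{2}}}\leq 2M$. For the shrinkingness of $(e_{i})$ --- which together with bounded completeness yields reflexivity of $X^{\ell_{2}}$ by James' theorem --- I would appeal to the general framework of Odell, Schlumprecht and Zs\'ak \cite{OdellSchlumprechtZsak2007}: their Section 3 shows that $Z^{V}(E)$ is reflexive whenever $Z$ has a shrinking FDD $E$ and $V$ is reflexive. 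Applied here with $Z=X$, $E$ the $1$-dimensional FDD given by $(x_{i})$, and $V=\ell_{2}$, this delivers (i). This shrinkingness step is the main technical obstacle and is the only place where the shrinking hypothesis on $(x_{i})$ is genuinely used; the bounded-completeness and AUC estimates only require $(x_{i})$ to be a basis.

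For (ii), reflexivity of $X^{\ell_{2}}$ lets me apply Proposition \ref{duality} with $(X^{\ell_{2}})^{*}$ in the role of $X$: $(X^{\ell_{2}})^{*}$ is $2$-AUS if and only if $X^{\ell_{2}}$ is $2$-AUC, since on a reflexive space weak$^{*}$-AUC of the bidual coincides with AUC of the space itself. Since $(e_{i})$ is shrinking, the modulus $\overline{\delta}_{X^{\ell_{2}}}$ is, up to arbitrarily small error, computable on block sequences: given $x\in S_{X^{\ell_{2}}}$ and a normalized block sequence $(y_{n})$ with $\supp(y_{n})\to\infty$, first approximate $x$ by a finitely-supported vector; then for $n$ large the supports of $x$ and $y_{n}$ are separated by at least one skipped index, so the skipped-blocks inequality yields
$$\|x+t\,y_{n}\|_{X^{\ell_{2}}}^{2}\ \geq\ 1+t^{2}-o(1),$$
and hence $\overline{\delta}_{X^{\ell_{2}}}(t)\geq t^{2}/4$ for $t\in[0,1]$. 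Proposition \ref{duality} then transports $2$-AUC of $X^{\ell_{2}}$ into $2$-AUS of $(X^{\ell_{2}})^{*}$, and $Sz((X^{\ell_{2}})^{*})=\omega$ follows from the Knaust--Odell--Schlumprecht theorem (or equivalently from the quantitative estimate $Sz((X^{\ell_{2}})^{*},\varepsilon)\leq C\varepsilon^{-2}$ extracted from the $2$-AUS modulus).
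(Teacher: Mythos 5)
Your proposal is correct in substance, and for part (i) it ends up in the same place as the paper: the paper simply cites Corollary 3.4 of Odell--Schlumprecht--Zs\'ak for the reflexivity of $X^{\ell_2}$, which is exactly the citation you fall back on for shrinkingness (and which in fact already gives all of (i), making your separate bounded-completeness argument redundant). For part (ii), however, you take a genuinely different route. The paper works entirely on the dual side: since $(e_i)$ is shrinking, $(X^{\ell_2})^*=\overline{\mathrm{span}}\{e_i^*\}$, and for successive blocks $x^*,y^*$ there one has the \emph{upper} estimate $\|x^*+y^*\|^2\le\|x^*\|^2+\|y^*\|^2$, which immediately yields that $(X^{\ell_2})^*$ is $2$-AUS --- no duality theorem and no prior appeal to reflexivity is needed. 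You instead prove the \emph{lower} skipped-block estimate in $X^{\ell_2}$ itself, deduce that $X^{\ell_2}$ is $2$-AUC, and then transport this to $2$-AUS of the dual via Proposition \ref{duality}, using reflexivity to identify weak$^*$-AUC of $(X^{\ell_2})^{**}$ with AUC of $X^{\ell_2}$. Both are valid; the paper's version is shorter and logically lighter (indeed the paper remarks that its dual-side estimate, combined with Proposition \ref{nonreflexive}, \emph{reproves} reflexivity, whereas your argument must consume reflexivity as an input before the duality step). Your version has the mild virtue of isolating the primal $\ell_2$-lower estimate, which is the structural fact underlying everything.

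One small but genuine slip in your bounded-completeness sketch: after extracting skipped blocks $(v_k)$ from the bounded partial sums $S_n$, you claim $\|\sum_{k=1}^K v_k\|\le 2M$. This is not justified, since $\sum_{k=1}^K v_k$ is the image of $S_{n_K}$ under a projection onto a union of $K$ disjoint intervals, and $(e_i)$ need not be suppression unconditional (it is unconditional only when $(x_i)$ is). The correct move is to apply the lower estimate directly to $S_{n_K}$: concatenating near-optimal partitions for the individual $v_k$ and discarding the nonnegative contributions of the gap blocks gives $\|S_{n_K}\|^2\ge\sum_{k=1}^K\|v_k\|^2-\eta\ge K\delta^2-\eta$, which contradicts $\|S_{n_K}\|\le M$. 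Since you also cover (i) by the citation, this does not affect the correctness of the overall proof.
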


\begin{proof} The statement (i) is a particular case of Corollary 3.4 in \cite{OdellSchlumprechtZsak2007}.\\
(ii) Since $(e_i)_{i=1}^\infty$ is shrinking, $(X^{\ell_2})^*$ can be seen as the closed linear span of $\{e_i^*: i\in \nn\}$. Now it is clear that if $x^*, y^*\in (X^{\ell_2})^*$ with $\max \supp(x^*)< \min \supp(y^*)$, then $\|x^*+y^*\|^2 \leq \|x^*\|^2+\|y^*\|^2$. Here, the support is meant with respect to the basis $(e_i^*)_{i=1}^\infty$ of $(X^{\ell_2})^*$. Hence $(X^{\ell_2})^*$ is $2$-AUS and has Szlenk index $\omega$.\\
Note that this also implies that the bidual norm on $(X^{\ell_2})^{**}$ is weak$^*$-AUC and, by Proposition \ref{nonreflexive}, reproves the fact that $X^{\ell_2}$ is reflexive, knowing that $(e_i)_{i=1}^\infty$ is shrinking.
\end{proof}

 Our next proposition provides a crucial estimate for $Sz(X^{\ell_2})$.

\begin{proposition}\label{tech9} Assume that $(x_i)_{i=1}^\infty$ is a shrinking basis of $X$.\\
Then  $Sz(X^{\ell_2})\leq Sz(X)$.

\end{proposition}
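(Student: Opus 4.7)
The plan is to show $Sz(X^{\ell_2})\le Sz(X)$ by transferring the weak${}^*$ Szlenk structure on $B_{(X^{\ell_2})^*}$ to $B_{X^*}$ via the biorthogonal correspondence $e_j^*\leftrightarrow x_j^*$ underlying the map $I:X^{\ell_2}\to X$. Using that $(e_i^*)$ is a shrinking basis of $(X^{\ell_2})^*$ (Proposition \ref{OSZreflexive}), a standard gliding-hump argument reduces the computation of $s^\beta_\eps(B_{(X^{\ell_2})^*})$ to the analysis of weak${}^*$-null block trees: after small perturbations and passage to a full subtree, the witnesses can be taken to be block functionals relative to $(e_i^*)$ with pairwise disjoint interval supports along each branch. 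The same block analysis is available on $B_{X^*}$ because $(x_j^*)$ is a basis of $X^*$ by shrinkingness of $(x_j)$.

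For a block $w^*=\sum_{j\in J}c_je_j^*$ supported on an interval $J$, the natural lift $\hat w^*=\sum_{j\in J}c_jx_j^*\in X^*$ satisfies $I^*\hat w^*=w^*$, whence $\|w^*\|_{(X^{\ell_2})^*}\le\|\hat w^*\|_{X^*}$; moreover $\|\hat w^*\|_{X^*}\le K\|w^*|_J\|_{(X|_J)^*}$ where $K$ is the interval projection constant of $(x_j)$ in $X$. Since $(x_j)$ is shrinking, one has $\|P_Jx\|_X\to 0$ as $\min J\to\infty$ for every $x\in X$, so lifts of weak${}^*$-null block sequences with disjoint supports pushing to infinity are themselves weak${}^*$-null in $X^*$.

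The principal difficulty is that $\|\hat w^*\|_{X^*}$ can be arbitrarily larger than $\|w^*\|_{(X^{\ell_2})^*}$; when $X=c_0$, the ratio grows like $\sqrt{|J|}$, so lifts of elements of $B_{(X^{\ell_2})^*}$ need not remain in $B_{X^*}$. To circumvent this, I would refine each block $w^*$ appearing in the tree by decomposing its support along a partition $J=J_1\cup\cdots\cup J_r$ witnessing the partition-sup definition of the $X^{\ell_2}$-norm: for an appropriate such partition, some sub-block $w^*|_{J_k}$ has $\|w^*|_{J_k}\|_{(X^{\ell_2})^*}\ge\eps/\sqrt r$, with $\|w^*|_{J_k}\|_{(X|_{J_k})^*}$ comparable to this quantity up to the constant $K$. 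Then $\widehat{w^*|_{J_k}}$ lies in $X^*$, and after rescaling by a universal constant it belongs to $B_{X^*}$ with norm bounded below by some $\eps'>0$ depending only on $\eps$.

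The main obstacle is coordinating these partition refinements coherently across the whole weak${}^*$-null tree so that its ordinal order is preserved; this is handled by a transfinite induction combined with careful subtree extraction at each level. Once this is done, one obtains an $\eps'$-separated weak${}^*$-null block tree in $B_{X^*}$ of the same order, which forces $Sz(X,\eps')\ge\beta$. Taking the supremum over $\eps>0$ then yields $Sz(X^{\ell_2})\le Sz(X)$.
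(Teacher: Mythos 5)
Your strategy (lift an $\eps$-separated weak$^*$-null tree from $B_{(X^{\ell_2})^*}$ back into $B_{X^*}$ block by block) breaks down at exactly the point you identify as the principal difficulty, and the proposed fix does not repair it. Test it on $X=c_0$, so that $X^{\ell_2}=\ell_2$ and $(X^{\ell_2})^*=\ell_2$ isometrically, and take $w^*=N^{-1/2}\sum_{j=1}^N e_j^*$, a unit vector. For any subinterval $J_k\subset\{1,\dots,N\}$ with $|J_k|=m$ one has $\|w^*|_{J_k}\|_{(X^{\ell_2})^*}=\sqrt{m/N}$, while the lift $\widehat{w^*|_{J_k}}=N^{-1/2}\sum_{j\in J_k}x_j^*$ has $\ell_1$-norm $mN^{-1/2}$. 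Membership of that lift in $B_{X^*}$ forces $m\le \sqrt{N}$, and then $\|w^*|_{J_k}\|_{(X^{\ell_2})^*}\le N^{-1/4}\to 0$. So no sub-block of $w^*$ has a lift that is simultaneously in $B_{X^*}$ and of norm bounded below by a function of $\eps$ alone; your pigeonhole bound $\eps/\sqrt{r}$ is correct but useless, because $r$ is dictated by the witnessing partition (here $r=N$) and is not controlled by $\eps$. The separation constant of the transferred tree therefore degenerates to $0$ as one moves through the tree, and you never obtain $Sz(X,\eps')\ge \beta$ for a fixed $\eps'>0$. This is not a technicality about coordinating subtree extractions; it is a quantitative obstruction at the level of a single functional.

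The missing idea is to interpose the space $\ell_2(X)$ rather than $X$ itself. The paper does not lift a block functional to one element of $X^*$; it represents each element of a norming subset of $B_{(X^{\ell_2})^*}$ as $j(y^*)=\sum_i I^*y_i^*$ for a whole \emph{sequence} $y^*=(y_i^*)_i$ of successively supported functionals with $\sum_i\|y_i^*\|_{X^*}^2\le 1$, i.e.\ $y^*\in B_{\ell_2(X)^*}$. The set $M$ of such sequences is weak$^*$-compact, $j$ is Lipschitz onto the weak$^*$-compact set $K=j(M)$, and $K$ is $c$-norming for $X^{\ell_2}$ by a Hahn--Banach argument. Since Lipschitz surjections do not increase the Szlenk index, $Sz(K)\le Sz(M)\le Sz(\ell_2(X))=Sz(X)$ (the last equality is Brooker's theorem on direct sums), and the convex-hull theorem of \cite{Causey2017a} transfers the estimate from $K$ to $B_{(X^{\ell_2})^*}$. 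The $\ell_2$-sum is precisely what absorbs the $\sqrt{r}$ loss that defeats your pointwise lifting. A minor further point: $\|P_Jx\|_X\to 0$ as $\min J\to\infty$ holds for any Schauder basis and is not what shrinkingness provides; shrinkingness is needed to know that $(x_j^*)$ spans $X^*$ and that $X^{\ell_2}$ is reflexive.
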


Our strategy will be to show that $Sz(X^{\ell_2})\le Sz(\ell_2(X))$, where $\ell_2(X)$ is the space of sequences $(x_n)_{n=1}^\infty$ in $X$ such that $\sum_{n=1}^\infty \|x_n\|_X^2$ is finite, equipped with its natural norm :
$$\big\|(x_n)_{n=1}^\infty\big\|_{\ell_2(X)}= \Big(\sum_{n=1}^\infty \|x_n\|_X^2\Big)^{1/2}.$$
Then the conclusion will follow from the well known fact that $Sz(\ell_2(X))=Sz(X)$ when $X$ is infinite dimensional (see \cite{Brooker2011} for a general study of the behavior of the Szlenk index under direct sums).

Let $M_1$ be the set of all sequences $(y_i^*)_{i=1}^\infty$ in $B_{\ell_2(X^*)}$ such that there exist $n\in \nn$ and $0=k_0<\cdots<k_{n-1}$ with the following properties: for every $1\le i<n$, $y_i^*$ belongs to the linear span of $\{x_j^*,\ k_{i-1}<j\le k_i\}$, $y_n^*$ belongs to the closed linear span of $\{x_j^*,\ j>k_{n-1}\}$ and $y_i^*=0$ for all $i>n$.
Then we denote by $M_2$ the set of all sequences $(y_i^*)_{i=1}^\infty$ in $B_{\ell_2(X^*)}$ such that there exits an infinite sequence $0=k_0<\cdots<k_i<\cdots$ such that for all $i\in \nn$, $y_i^*$ belongs to the linear span of $\{x_j^*,\ k_{i-1}<j\le k_i\}$. Finally, we set $M=M_1\cup M_2$.\\
It is easy to check that $M$ is weak$^*$-compact in $\ell_2(X^*)=\ell_2(X)^*$.\\
Recall that $I:X^{\ell_2}\to X$ denotes the continuous linear map such that $I(e_i)=x_i$ and that $\|I\|=1$, and define $j:M\to (X^{\ell_2})^*$ by
$$\forall y^*=(y_i^*)_{i=1}^\infty \in M,\ \ j(y^*)=\sum_{i=1}^\infty I^*y_i^*.$$
An elementary application of the Cauchy-Schwarz inequality shows that $j$ is well defined and that
$$\forall y^* \in M,\ \ \|j(y^*)\|_{(X^{\ell_2})^*} \le \|y^*\|_{\ell_2(X^*)}.$$
It is also easy to verify that $j$ is weak$^*$ to weak$^*$ continuous.\\
Note that the set $j(M)$ can be less formally described as the set of all $\sum_{j=1}^\infty b_je_j^*$ such that there exists an increasing finite or infinite sequence of blocks of $\Ndb$ $(F_k)_{k\in A}$ so that
$$\sum_{k\in A}\big\|\sum_{j\in F_k} b_jx_j^*\big\|_{X^*}^2 \le 1.$$
So we now consider the weak$^*$-compact subset $K=j(M)$ of $B_{(X^{\ell_2})^*}$. First we will need to show that $K$ is norming for $X^{\ell_2}$. More precisely, we have:

\begin{Claim} There exists a constant $c>0$ such that
$$\forall x\in X^{\ell_2},\ \ \|x\|_{X^{\ell_2}}\ge c\sup_{x^*\in K}x^*(x).$$

\label{hb}
\end{Claim}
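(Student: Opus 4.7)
The plan is to establish the claim with $c=1$, by a direct Cauchy--Schwarz computation that also makes explicit the ``elementary application'' referenced just above the claim. Given $x^*\in K$, write $x^*=\sum_i I^*y_i^*$ for some $(y_i^*)\in M$: each $y_i^*$ is supported (in the coordinate basis $(x_j^*)$ of $X^*$) on a consecutive block $F_i\subseteq\mathbb N$, the $F_i$ are disjoint intervals whose union is all of $\mathbb N$ (possibly with a terminal infinite block in the $M_1$ case), and $\sum_i\|y_i^*\|_{X^*}^2\le 1$.

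For $x=\sum_j a_j e_j\in X^{\ell_2}$, the series $I(x)=\sum_j a_j x_j$ converges in $X$ since $\|I\|=1$, and using that $y_i^*(x_j)=0$ whenever $j\notin F_i$ one computes
$$x^*(x)=\sum_i y_i^*(I(x))=\sum_i y_i^*\Bigl(\sum_{j\in F_i}a_j x_j\Bigr).$$
Cauchy--Schwarz in $\ell_2$ then yields
$$|x^*(x)|\le\Bigl(\sum_i\|y_i^*\|_{X^*}^2\Bigr)^{1/2}\Bigl(\sum_i\Bigl\|\sum_{j\in F_i}a_j x_j\Bigr\|_X^2\Bigr)^{1/2}\le 1\cdot\|x\|_{X^{\ell_2}},$$
where the first factor is at most $1$ by membership of $(y_i^*)$ in $M$, and the second factor is bounded by $\|x\|_{X^{\ell_2}}$ because the $F_i$ form a partition of $\mathbb N$ into consecutive intervals, which is exactly the sort of partition admitted by the defining supremum of the $X^{\ell_2}$-norm.

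The one technical wrinkle, and the place I will be careful, is the $M_1$ case where the final block is the infinite interval $(k_{n-1},\infty)$, because the defining supremum of $\|\cdot\|_{X^{\ell_2}}$ runs over partitions into \emph{finitely} many \emph{bounded} initial intervals. I will handle this by truncation: for each $N>k_{n-1}$, the finite partition $0<k_1<\cdots<k_{n-1}<N$ gives
$$\sum_{i<n}\Bigl\|\sum_{j\in F_i}a_j x_j\Bigr\|_X^2+\Bigl\|\sum_{k_{n-1}<j\le N}a_j x_j\Bigr\|_X^2\le\|x\|_{X^{\ell_2}}^2,$$
and letting $N\to\infty$, using norm-convergence in $X$ of $\sum_{j>k_{n-1}}a_j x_j$ together with continuity of $\|\cdot\|_X$, yields the desired bound in the limit. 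The analogous (but easier) infinite-partition case from $M_2$ is handled by the same type of finite-truncation followed by monotone passage to the limit. Since every step is either definitional or a routine Cauchy--Schwarz, there is no substantive obstacle; the argument simply makes explicit the inequality $\|j(y^*)\|_{(X^{\ell_2})^*}\le\|y^*\|_{\ell_2(X^*)}$ asserted without proof just above the claim, so that $\sup_{x^*\in K}x^*(x)\le\|x\|_{X^{\ell_2}}$ for every $x\in X^{\ell_2}$.
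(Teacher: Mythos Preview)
Your argument is clean, but it proves the wrong inequality. You have shown that $\sup_{x^*\in K}x^*(x)\le \|x\|_{X^{\ell_2}}$, which is exactly the inequality $\|j(y^*)\|_{(X^{\ell_2})^*}\le\|y^*\|_{\ell_2(X^*)}$ already asserted in the text just above the claim; indeed you say so yourself in your last sentence. That inequality is equivalent to $K\subset B_{(X^{\ell_2})^*}$ and carries no new information.

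The claim as printed has its inequality reversed: the sentence preceding it says ``we will need to show that $K$ is norming for $X^{\ell_2}$'', the paper's own proof concludes with $j(y^*)(x)\ge \frac{\|x\|_{X^{\ell_2}}-\eps}{C}$, and the only way the subsequent application (``$cB_{(X^{\ell_2})^*}\subset L$'' via geometric Hahn--Banach) can work is if one knows $\sup_{x^*\in K}x^*(x)\ge c\|x\|_{X^{\ell_2}}$. So the content of the claim is the \emph{lower} bound on the supremum, not the upper bound you established.

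For that direction your approach does nothing: given $x=\sum a_je_j$, one must \emph{produce} an element of $K$ that nearly norms $x$. The paper does this by choosing a near-optimal partition $0=k_0<\cdots<k_n$ for the $X^{\ell_2}$-norm of $x$, using Hahn--Banach on each block to get $u_i^*\in X^*$ supported on $(k_{i-1},k_i]$ with $u_i^*\bigl(\sum_{j\in(k_{i-1},k_i]}a_jx_j\bigr)=\bigl\|\sum_{j\in(k_{i-1},k_i]}a_jx_j\bigr\|_X$ and $\|u_i^*\|_{X^*}\le C$ (the bimonotonicity constant), and then renormalising the $u_i^*$ in $\ell_2$ to land in $M$. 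This yields $\sup_{x^*\in K}x^*(x)\ge \frac{1}{C}\|x\|_{X^{\ell_2}}$, i.e.\ $c=1/C$. None of that is present in your write-up.
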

\begin{proof} Let $C\ge 1$ be the bimonotonicity constant of the Schauder basis $(x_i)_{i=1}^\infty$ of $X$, let $x=\sum_{i=1}^\infty a_ie_i \in X^{\ell_2}$ and $\eps>0$. Pick $0\le k_0<\cdots<k_n$ such that
$$\Bigl(\sum_{i=1}^n \big\|\sum_{j=k_{i-1}+1}^{k_i} a_jx_j\big\|_X^2\Bigr)^{1/2}\ge \|x\|_{X^{\ell_2}}-\eps.$$
It follows from the Hahn-Banach theorem that for all $1\le i\le n$, there exists $u_i^*\in X^*$ with $\text{supp}(u_i^*)\subset (k_{i-1},k_i]$ and such that
$$u_i^*\Big(\sum_{j=k_{i-1}+1}^{k_i} a_jx_j\Big)=\big\|\sum_{j=k_{i-1}+1}^{k_i} a_jx_j\big\|_X\ \ \text{and}\ \ \|u_i^*\|_{X^*}\le C.$$
We now set
$$y_i^*=\frac{\big\|\sum_{j=k_{i-1}+1}^{k_i} a_jx_j\big\|_X\, u_i^*}{C\Big(\sum_{i=1}^n\big\|\sum_{j=k_{i-1}+1}^{k_i} a_jx_j\big\|_X^2\Big)^{1/2}}\ \ \text{for}\ 1\le i\le n\ \  \text{and}\ y_i^*=0\ \text{for}\ i>n.$$
It is then clear that $y^*=(y_i^*)_{i=1}^\infty \in M$ and
$$j(y^*)(x)= \frac{1}{C}\Big(\big\|\sum_{j=k_{i-1}+1}^{k_i} a_jx_j\big\|_X^2\Big)^{1/2}\ge \frac{\|x\|_{X^{\ell_2}}-\eps}{C}.$$
This finishes the proof of our claim.
\end{proof}

\begin{Claim} The function $j:M\to K$ is $2C$-Lipschitz, where $C$ is the bimonotonicity constant of the basis $(x_i)_{i=1}^\infty$ in $X$.

\label{lip}
\end{Claim}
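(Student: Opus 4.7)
The idea I would pursue is to unfold
$$ (j(y^*) - j(z^*))(x) \,=\, \sum_{i=1}^\infty (y_i^* - z_i^*)(Ix), \qquad x\in B_{X^{\ell_2}}, $$
and to pair the functionals by the common sequence index $i$ rather than by support. Pairing by index is what matches the right-hand side of the Lipschitz inequality, because $\|y^* - z^*\|_{\ell_2(X^*)}^2 = \sum_i \|y_i^* - z_i^*\|_{X^*}^2$ is also indexed by $i$; the more natural-looking approach of passing to the common refinement of the two partitions produces a bound that is not controlled by $\|y^* - z^*\|_{\ell_2(X^*)}$, and so appears to be a dead end.

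For each $i$, the functionals $y_i^*$ and $z_i^*$ are supported on intervals $(k_{i-1}, k_i]$ and $(l_{i-1}, l_i]$ (with the appropriate conventions in the $M_1$ case and when a term vanishes), so $y_i^* - z_i^*$ is supported in $U_i := (k_{i-1}, k_i] \cup (l_{i-1}, l_i]$. I would therefore write $(y_i^* - z_i^*)(Ix) = (y_i^* - z_i^*)(P_{U_i}^X Ix)$, where $P_{U_i}^X$ is the natural projection of $X$ onto the closed span of $\{x_j : j\in U_i\}$, and apply the Cauchy--Schwarz inequality in $i$ to reduce the claim to
$$ \Bigl( \sum_i \|P_{U_i}^X Ix\|_X^2 \Bigr)^{1/2} \le 2C \|x\|_{X^{\ell_2}} \qquad \text{for all } x\in B_{X^{\ell_2}}. $$

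The heart of the proof is then the pointwise estimate $\|P_{U_i}^X Ix\|_X \le C(\|u_i^y\| + \|u_i^z\|)$, where $u_i^y := P_{(k_{i-1}, k_i]}^X Ix$ and $u_i^z := P_{(l_{i-1}, l_i]}^X Ix$. I would prove it by a short case analysis on the relative position of the two intervals: when one is nested inside the other, $U_i$ coincides with the larger block and the bound is immediate; in the remaining cases one writes $P_{U_i}^X = P_{(k_{i-1},k_i]}^X + P_{(l_{i-1},l_i]\setminus (k_{i-1},k_i]}^X$ and notes that the second piece is the projection onto a sub-interval of $(l_{i-1}, l_i]$, which contributes at most $C\|u_i^z\|$ by bimonotonicity. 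Minkowski's inequality in $\ell_2(\nn)$ and the defining property of $\|\cdot\|_{X^{\ell_2}}$ applied to the admissible partitions $(k_i)$ and $(l_i)$ then yield
$$ \Bigl( \sum_i (\|u_i^y\| + \|u_i^z\|)^2 \Bigr)^{1/2} \le \Bigl( \sum_i \|u_i^y\|^2 \Bigr)^{1/2} + \Bigl( \sum_i \|u_i^z\|^2 \Bigr)^{1/2} \le 2\|x\|_{X^{\ell_2}}, $$
which combines with the Cauchy--Schwarz step to give the $2C$-Lipschitz bound. The main obstacle is the case analysis in the pointwise estimate: one must verify in every configuration of the two interval blocks that the bimonotonicity constant $C$ suffices uniformly; everything else is a routine application of Cauchy--Schwarz, Minkowski, and the definition of $X^{\ell_2}$.
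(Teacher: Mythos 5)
Your proof is correct and is essentially the paper's argument run on the predual side: the paper splits each difference $y_i^*-z_i^*$ into two interval projections $u_i^*+v_i^*$ aligned with the respective partitions and bounds $\sum_i u_i^*$ and $\sum_i v_i^*$ separately via Cauchy--Schwarz, whereas you perform the corresponding split on the test vector, writing $P_{U_i}Ix$ as a piece aligned with $(I_i)$ plus an interval piece inside $J_i$. The case analysis on the relative position of the two intervals, the factor $C$ from bimonotonicity, and the factor $2$ from the final triangle inequality are identical in the two formulations.
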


\begin{proof} Let us fix $y^*=(y^*_i)_{i=1}^\infty, z^*=(z^*_i)_{i=1}^\infty\in M$. Then there exist $S,T\subset \nn$ and sequences of successive intervals $(I_s)_{s\in S}$, $(J_t)_{t\in T}$, where $S,T$ are (possibly infinite) initial segments of $\nn$, $\{i:y^*_i\neq 0\}\subset S$,  $\{i: z^*_i\neq 0\}\subset T$, and for each $s\in S$ and $t\in T$, $\text{supp}(y^*_s)\subset I_s$ and $\text{supp}(z^*_t)\subset J_t$ (here the supports of $y^*_s$ and $z^*_t$ are meant with respect to the basis $(x^*_j)_{j=1}^\infty$ of $X^*$). By allowing either $I_s=\varnothing$ or $J_t=\varnothing$ for $s>\max S$ or $t>\max T$, we may assume $S=T=\nn$.   For each $i\in\nn$, consider three cases: \begin{enumerate}[(a)]\item $J_i\subset I_i$,
\item $I_i\subset J_i$,
\item neither $(a)$ nor $(b)$ holds.
\end{enumerate}
If $(a)$ holds, let $u_i^*=y^*_i-z^*_i\in \text{span}\{x^*_j: j\in I_i\}$ and $v^*_i=0\in \text{span}\{x^*_j: j\in J_i\}$.\\
If $(b)$ holds, let $u^*_i=0\in \text{span}\{x^*_j: j\in I_i\}$ and $v^*_i=y^*_i-z^*_i\in \text{span}\{x^*_j: j\in J_i\}$.\\
If $(c)$ holds, let $u^*_i=P^*_{I_i\setminus J_i}(y^*_i-z^*_i)\in \text{span}\{x^*_j: j\in I_i\}$ and $v^*_i=P^*_{J_i}(y^*_i-z^*_i)\in \text{span}\{x^*_j: j\in J_i\}$. Here, for an interval $I$, $P_I:X\to \text{span}\{x_j: j\in I\}$ denotes the basis projection. Let us note that in case $(c)$, $I_i\setminus J_i$ is an interval. Then, since each vector $u^*_i$, $v^*_i$ is either zero or an interval projection of $y^*_i-z^*_i$, we have that for each $i$,  $\|u_i^*\|_{X^*} \leq C\|y^*_i-z^*_i\|_{X^*}$ and $\|v^*_i\|_{X^*}\leq C\|y^*_i-z^*_i\|_{X^*}$. From this it follows that $u^*=(u^*_i)_{i=1}^\infty, v^*=(v^*_i)_{i=1}^\infty$ lie in $\ell_2(X)^*$ and $\|u^*\|_{\ell_2(X)^*}, \|v^*\|_{\ell_2(X)^*} \leq C\|y^*-z^*\|_{\ell_2(X)^*}$.   Using that the $(u^*_i)_{i=1}^\infty$ are successively supported, another application of the Cauchy-Schwarz inequality yields that $\sum_{i=1}^\infty u^*_i $ is norm convergent in $(X^{\ell_2})^*$ with $\|\sum_{i=1}^\infty u^*_i\|_{(X^{\ell_2})^*} \leq C \|y^*-z^*\|_{\ell_2(X)^*}$. Similarly, $\|\sum_{i=1}^\infty v^*_i\|_{\ell_2(X)^*} \leq C\|y^*-z^*\|_{\ell_2(X)^*}$.   Since $j(y^*)-j(z^*)=\sum_{i=1}^\infty y^*_i-z^*_i=\sum_{i=1}^\infty u^*_i+v^*_i$, we deduce that $$\|j(y^*)-j(z^*)\|_{(X^{\ell_2})^*} \leq 2C\|y^*-z^*\|_{\ell_2(X)^*}.$$

\end{proof}

\begin{proof}[Proof of Proposition \ref{tech9}] It is easily seen that if $E$ and $F$ are Banach spaces, $B\subset E^*$ and $C\subset F^*$ are weak$^*$-compact and $f:B\to C$ is a Lipschitz surjection from $B$ to $C$, then $Sz(C)\le Sz(B)$ (see \cite[Lemma $2.5$(i)]{Causey2017a}). It follows from this fact and Claim \ref{lip} that $Sz(K) \leq Sz(M)$. On the other hand, since $M\subset B_{\ell_2(X)^*}$, we deduce from \cite{Brooker2011} that $Sz(M)\leq Sz(\ell_2(X))=Sz(X)$. Combining these yields that $Sz(K)\leq Sz(X)$. Denote by $L$ the weak$^*$-closed convex hull of $K$. It follows from Claim \ref{hb}  and the geometric Hahn-Banach theorem that $cB_{(X^{\ell_2})^*}\subset L \subset B_{(X^{\ell_2})^*}$. Finally we can apply Theorem 1.1 from \cite{Causey2017a} to deduce that since $Sz(K)\leq Sz(X)$, $Sz(L)\leq Sz(X)$. This finishes the proof of Proposition \ref{tech9}.
\end{proof}

The construction of our family of spaces $(G_\alpha)_{\alpha \in \Gamma\setminus \Lambda}$ will also rely on the use of the Schreier families. These families were introduced in \cite{AlspachArgyros1992}. Let us now recall the definition of the Schreier family $\mathcal{S}_\alpha$, for $\alpha$ a countable ordinal. Recall that $[\nn]^{<\omega}$ denotes the set of finite subsets of $\nn$, which we identify with the set of void or finite, strictly increasing sequences in $\nn$. We complete the notation introduced in section 2 by writing $E<F$ to mean $\max E<\min F$ and $n\leq E$ to mean $n\leq \min E$. For each countable ordinal $\alpha$,  $\mathcal{S}_\alpha$ will be a subset of $[\nn]^{<\omega}$. We let
$$\mathcal{S}_0=\{\emptyset\}\cup \{(n): n\in \nn\},$$
$$\mathcal{S}_{\alpha+1}= \{\emptyset\}\cup \Bigl\{\bigcup_{i=1}^n E_i:\ n\in \nn,\ \emptyset\neq E_i\in \mathcal{S}_\alpha,\  E_1<\ldots <E_n,\ n\leq E_1\Bigr\},$$
and if $\alpha<\omega_1$ is a limit ordinal, we fix an increasing sequence $(\alpha_n)_{n=1}^\infty$ tending to $\alpha$ and let $$\mathcal{S}_\alpha= \{E\in [\nn]^{<\omega}: \exists n\leq E\in \mathcal{S}_{\alpha_n}\}.$$
In what follows, $[\nn]^{<\omega}$ will be topologized by the identification $[\nn]^{<\omega}\ni E\leftrightarrow 1_E\in \{0, 1\}^\nn$, where $\{0, 1\}^\nn$ is equipped with the Cantor topology.\\
Given $(m_i)_{i=1}^k, (n_i)_{i=1}^k$ in $[\nn]^{<\omega}$, we say $(n_i)_{i=1}^k$ is a \emph{spread} of $(m_i)_{i=1}^k $ if $m_i\leq n_i$ for each $1\leq i\leq k$.

We say that a subset $\mathcal{F}$ of $[\nn]^{<\omega}$ is \begin{enumerate}[(i)]\item \emph{spreading} if it contains all spreads of its members, \item \emph{hereditary} if it contains all subsets of its members, \item \emph{regular} if it is spreading, hereditary, and compact. \end{enumerate}

Given $\mathcal{F}, \mathcal{G}\subset [\nn]^{<\nn}$, we let $$\mathcal{F}[\mathcal{G}]=\{\emptyset\} \cup \Bigl\{\bigcup_{i=1}^n E_i: \ n\in \nn,\ \emptyset\neq E_i\in \mathcal{G},\ E_1<\cdots <E_n,\ (\min E_i)_{i=1}^n\in \mathcal{F}\Bigr\}.$$
We refer to \cite{Causey2017b} for a detailed presentation of these notions and their fundamentals properties.

For a topological space $\cal F$, we denote $\cal F^{1}$ its Cantor-Bendixon derived set (the set of its accumulation points), for an ordinal $\alpha$, $\cal F^{\alpha}$, its Cantor-Bendixon derived set of order $\alpha$ and finally $CB(\cal F)$ its Cantor-Bendixon index.\\
We note that if $\cal F$ and $\cal G$ are regular subsets of $[\nn]^{<\omega}$, then $\mathcal{F}[\mathcal{G}]$ is regular and if the Cantor-Bendixson indices of $\mathcal{F}$ and $\mathcal{G}$ are $\alpha+1$ and $\beta+1$, respectively, then the Cantor-Bendixson index of $\mathcal{F}[\mathcal{G}]$ is $\beta\alpha+1$ (see Proposition $3.1$ in \cite{Causey2017b}).\\
For each $n\in \nn$, let
$$\mathcal{A}_n=\{E\in [\nn]^{<\omega}: |E|\leq n\}.$$
It is well-known that for each $\alpha<\omega_1$, $\mathcal{S}_\alpha$ is regular with Cantor-Bendixson index $\omega^\alpha+1$. Moreover, for each $n\in \nn$, $\mathcal{A}_n$ is regular with Cantor-Bendixson index $n+1$. These facts together with those cited from \cite{Causey2017b} yield the following.

\begin{lemma} Fix an ordinal $\alpha<\omega_1$ and $n\in \nn$. \begin{enumerate}[(i)]\item $\mathcal{A}_n[\mathcal{S}_\alpha]$ is regular with Cantor-Bendixson index $\omega^\alpha n+1$. \item For any $\beta<\omega_1$, $\mathcal{S}_\beta[\mathcal{S}_\alpha]$ is regular with Cantor-Bendixson index $\omega^{\alpha+\beta}+1$. \end{enumerate}

\label{complexity}

\end{lemma}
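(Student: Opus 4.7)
The plan is essentially to assemble three ingredients already on the table in the excerpt: the Cantor--Bendixson indices $CB(\mathcal{S}_\alpha)=\omega^\alpha+1$ and $CB(\mathcal{A}_n)=n+1$, together with the formula from Proposition 3.1 of \cite{Causey2017b} stating that if $\mathcal{F},\mathcal{G}$ are regular with $CB(\mathcal{F})=\alpha+1$ and $CB(\mathcal{G})=\beta+1$, then $\mathcal{F}[\mathcal{G}]$ is regular with $CB(\mathcal{F}[\mathcal{G}])=\beta\alpha+1$. Regularity of the composition in each of (i), (ii) is then immediate from that cited result, so the only work is to track the ordinal arithmetic.

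For (i), I would apply the composition formula with $\mathcal{F}=\mathcal{A}_n$ playing the role of the outer family, whose $CB$-index is $n+1$, and $\mathcal{G}=\mathcal{S}_\alpha$ playing the role of the inner family, whose $CB$-index is $\omega^\alpha+1$. Substituting into $\beta\alpha+1$ (with $\alpha\leftrightarrow n$ and $\beta\leftrightarrow \omega^\alpha$) gives $CB(\mathcal{A}_n[\mathcal{S}_\alpha])=\omega^\alpha\cdot n+1$, which is exactly the claim.

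For (ii), I would apply the same formula with $\mathcal{F}=\mathcal{S}_\beta$ as outer family and $\mathcal{G}=\mathcal{S}_\alpha$ as inner family, so that in the notation of the cited proposition one takes the outer exponent to be $\omega^\beta$ and the inner exponent to be $\omega^\alpha$. The resulting index is $\omega^\alpha\cdot\omega^\beta+1$, and then the standard identity $\omega^\alpha\cdot\omega^\beta=\omega^{\alpha+\beta}$ for ordinal exponentiation yields $CB(\mathcal{S}_\beta[\mathcal{S}_\alpha])=\omega^{\alpha+\beta}+1$.

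There is no genuine obstacle here; the lemma is really a corollary, and the only thing to be careful about is matching the variables in Proposition 3.1 of \cite{Causey2017b} to the present ones (the symbol $\alpha$ is reused inside that proposition and in the lemma statement with different meanings), and remembering that ordinal multiplication is written with the ``base'' on the left in the expression $\beta\alpha$ so that the index $\omega^\alpha\cdot n$ in (i) is the one obtained. If anything required expansion, it would be a brief recollection of why $\omega^\alpha\cdot\omega^\beta=\omega^{\alpha+\beta}$, but this is a standard fact of ordinal arithmetic that can be invoked without proof.
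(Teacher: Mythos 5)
Your proposal is correct and is exactly the argument the paper intends: the lemma is stated as an immediate consequence of $CB(\mathcal{S}_\alpha)=\omega^\alpha+1$, $CB(\mathcal{A}_n)=n+1$, and the composition formula $CB(\mathcal{F}[\mathcal{G}])=\beta\alpha+1$ from Proposition 3.1 of \cite{Causey2017b}, with the same bookkeeping of which family is outer and which is inner. Your care about the order of ordinal multiplication and the identity $\omega^\alpha\cdot\omega^\beta=\omega^{\alpha+\beta}$ is exactly the only point that needed attention.
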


\begin{lemma} If $\mathcal{F}$ and $\mathcal{G}$ are regular families, $E<F\neq \emptyset$, and $E, E\cup F\in \mathcal{F}[\mathcal{G}]$, then either $E\in \mathcal{F}^1[\mathcal{G}]$ or $F\in \mathcal{G}$.

\label{fact}
\end{lemma}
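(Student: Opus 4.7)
The plan is to fix a single decomposition witnessing $E\cup F\in\mathcal{F}[\mathcal{G}]$ and read off the conclusion from the location of the ``split'' between $E$ and $F$. So write $E\cup F = H_1\cup\cdots\cup H_n$ with nonempty $H_i\in\mathcal{G}$, $H_1<\cdots<H_n$, and $S:=(\min H_i)_{i=1}^n\in\mathcal{F}$. Since $F\neq\emptyset$ and $E<F$, there is a least index $k$ with $H_k\cap F\neq\emptyset$; by minimality, $H_i\subseteq E$ for $i<k$ and $H_i\subseteq F$ for $i>k$, while $H_k$ itself either sits inside $F$ or straddles $E$ and $F$.

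In the straddling case $H_k\cap E\neq\emptyset$, the intersection $H_k\cap E$ is a nonempty subset of $H_k$, so by $\mathcal{G}$ hereditary it lies in $\mathcal{G}$, and a quick check using $E<F$ gives $\min(H_k\cap E)=\min H_k$. If $k=n$, then $F=H_n\cap F$ is also a subset of $H_n$, and hereditary yields $F\in\mathcal{G}$. Otherwise $k<n$ and $E=H_1\cup\cdots\cup H_{k-1}\cup(H_k\cap E)$ is a valid $\mathcal{G}$-decomposition of $E$ with split-point set $T=(\min H_i)_{i=1}^k$; the length-$(k+1)$ prefix $(\min H_i)_{i=1}^{k+1}$ of $S$ lies in $\mathcal{F}$ by hereditary, and spreading its last entry shows that $T\cup\{n'\}\in\mathcal{F}$ for every $n'\geq \min H_{k+1}$, so $T\in\mathcal{F}^1$ and hence $E\in\mathcal{F}^1[\mathcal{G}]$. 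In the complementary case $H_k\subseteq F$, either $k=n$ and $F=H_n\in\mathcal{G}$, or $k<n$ and $E=H_1\cup\cdots\cup H_{k-1}$ with split-point set $T=(\min H_i)_{i=1}^{k-1}$; an analogous spreading argument, starting from the length-$k$ prefix of $S$ and spreading $\min H_k$ to any $n'\geq \min H_k$, again gives $T\in\mathcal{F}^1$ and so $E\in\mathcal{F}^1[\mathcal{G}]$.

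I anticipate no real obstacle; the proof is a careful case analysis on where the first element of $F$ appears among the blocks $H_i$. The only conceptual ingredient beyond the set-theoretic bookkeeping is the observation that, for a hereditary and spreading family, $T\in\mathcal{F}^1$ as soon as $T\cup\{n\}\in\mathcal{F}$ for a single $n>\max T$, which follows from the Cantor-topology description of $\mathcal{F}^1$ together with one application of spreading. Note that the hypothesis $E\in\mathcal{F}[\mathcal{G}]$ is not actually needed in this argument, as the decomposition of $E\cup F$ supplies its own decompositions of $E$.
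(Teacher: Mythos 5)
Your proof is correct and follows essentially the same route as the paper's: fix a block decomposition of $E\cup F$, locate where $E$ ends relative to the blocks, and conclude either by heredity of $\mathcal{G}$ (giving $F\in\mathcal{G}$) or by the spreading argument showing that a proper initial segment of a member of $\mathcal{F}$ lies in $\mathcal{F}^1$ (giving $E\in\mathcal{F}^1[\mathcal{G}]$). The only cosmetic difference is the organization of cases --- the paper splits on whether $E$ meets the last block $E_n$, while you split on the first block meeting $F$ --- and both handle the degenerate $E=\emptyset$ situation correctly.
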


\begin{proof} Write $E\cup F=\cup_{i=1}^n E_i$, $\emptyset\neq E_i\in \mathcal{G}$, $E_1<\ldots <E_n$, $(\min E_i)_{i=1}^n\in \mathcal{F}$.

If $E\cap E_n=\emptyset$, then there exists $1\leq m\leq n$ such that $E\cap E_i\neq \emptyset$ for each $i<m$ and $E\cap E_i=\emptyset$ for each $m\leq i\leq n$.\\
If $m=1$, $E=\emptyset\in \mathcal{F}^1$, since $\emptyset\prec (\min E_i)_{i=1}^n\in \mathcal{F}$. \\
If $m>1$, the representation
$$E=\bigcup_{i=1}^{m-1} (E\cap E_i)$$
witnesses that $E\in \mathcal{F}^1[\mathcal{G}]$, since $(\min E_i)_{i=1}^{m-1}\in \mathcal{F}^1$.

Now if $E\cap E_n\neq \emptyset$, then $F=E_n\setminus E\subset E_n$, and $F\in \mathcal{G}$.
\end{proof}

We are now ready to prove Theorem \ref{Causey2}, that is, to construct for each $\alpha\in \Gamma\setminus \Lambda$ a reflexive Banach space $G_\alpha$ with an unconditional basis and such that $Sz(G_\alpha)=\alpha$ and $Sz(G_\alpha^*)=\omega$.

So, let $\alpha\in \Gamma\setminus \Lambda$. We write $\alpha=\omega^\delta$, with $\delta \in (0,\omega_1)$. Then by standard facts about ordinals, either $\delta=\omega^\xi$ for some ordinal $\xi\in [0,\omega_1)$ or $\delta=\beta+\gamma$ for some $\beta, \gamma<\delta$. We shall separate our construction into these two main cases.

\subsection{First case: $\delta=\omega^\xi$}\

So let us first suppose that $\delta=\omega^\xi$ with $\xi\in [0,\omega_1)$. Then $\xi$ must either be $0$ or a successor ordinal, otherwise $\alpha\in \Lambda$.\\
If $\xi=0$, let $\mathcal{F}_n=\mathcal{S}_0$, for all $n \in \Ndb \cup\{0\}$.\\
If $\xi=\zeta+1$, let $\mathcal{F}_0=\mathcal{S}_0$ and $\mathcal{F}_{n+1}= \mathcal{S}_{\omega^\zeta}[\mathcal{F}_n]$ for $n\in \Ndb$.\\
In both cases, denote
$$M_n=\Bigl\{2^{-n} \sum_{i\in E} e^*_i:  E \in \mathcal{F}_n \Bigr\}\ \text{for}\ n\in \{0\}\cup \nn\ \
\text{and}\ \ M=\bigcup_{n=0}^\infty M_n.$$
where $(e_i^*)_{i=1}^\infty$ is the the sequence of coordinate functionals defined on $c_{00}$, the space of finitely supported sequences.\\
Then we define $\mathfrak{G}_\alpha$ to be the completion of $c_{00}$ with respect to the norm $$\|x\|_{\mathfrak{G}_\alpha} = \sup_{x^*\in M} |x^*(x)|.$$
Note that the canonical basis of $c_{00}$ is a 1-suppression unconditional basis of $\mathfrak{G}_\alpha$. To keep our notation consistent we shall denote $(x_i)_{i=1}^\infty$ this basis of $\mathfrak{G}_\alpha$. The reason is that we need next to set  $G_\alpha=\mathfrak{G}^{\ell_2}_\alpha$, where this construction is meant with respect to the basis $(x_i)_{i=1}^\infty$, which we shall later call the canonical basis of $\mathfrak{G}_\alpha$. On the other hand $(e_i)_{i=1}^\infty$ will still denote the canonical basis of $c_{00}$ considered as a basis of $G_\alpha$. Finally, we define the following subsets of $\mathfrak{G}_\alpha^*$:
$$K_n=\Bigl\{2^{-n} \sum_{i\in E} x^*_i:  E \in \mathcal{F}_n \Bigr\}\ \text{for}\ n\in \{0\}\cup \nn\ \
\text{and}\ \ K=\bigcup_{n=0}^\infty K_n.$$
Later, the sets $M_n$ and $M$ will be considered as subsets of $G_\alpha^*$.

\medskip It is easily checked that $\mathfrak{G}_\omega=c_0$ and $G_\omega=\ell_2$. So we clearly have that $G_\omega$ is reflexive with an unconditional basis and $Sz(G_\omega)=Sz(G_\omega^*)=\omega$. So we shall now assume that $\xi\neq 0$ and is therefore a countable successor ordinal.

\begin{proposition}\label{firstcase}  Assume $\alpha=\omega^{\omega^\xi}$, where $\xi$ is a countable successor ordinal.\\
Then, $Sz(\mathfrak{G}_\alpha)\leq \alpha$.

\label{first prop}

\end{proposition}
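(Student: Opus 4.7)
The plan is to bound the Szlenk index of a weak$^*$-compact norming subset $\tilde K \subset \mathfrak{G}_\alpha^*$ by its topological Cantor--Bendixson (CB) index, then transfer the estimate to $B_{\mathfrak{G}_\alpha^*}$ via Theorem~$1.1$ of \cite{Causey2017a}, following the scheme used in Proposition~\ref{tech9}.

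First I would verify that $\tilde K := K \cup \{0\}$ is weak$^*$-compact and identify its topology. For each $n \geq 0$, the map $\psi_n : \mathcal{F}_n \to K_n$ defined by $\psi_n(E) = 2^{-n}\sum_{i\in E} x_i^*$ is a homeomorphism between the Cantor-topologised family $\mathcal{F}_n$ and $K_n$ equipped with the induced weak$^*$ topology: continuity follows from the boundedness of $K_n$ together with pointwise convergence on the basis $(x_i)_{i=1}^\infty$, and injectivity uses that the coordinates of elements of $K_n$ lie in $\{0,2^{-n}\}$. The same coordinate argument shows that a sequence in $K_n\setminus\{0\}$ cannot weak$^*$-converge to a point of $K_m\setminus\{0\}$ when $n\neq m$, because the two ``scales'' $2^{-n}$ and $2^{-m}$ are incompatible. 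Since $0 = \psi_n(\emptyset)\in K_n$ for every $n$, the space $\tilde K$ is the ``one-point union'' of the $K_n$'s at $0$.

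Second I would combine the recursion $\mathcal{F}_{n+1}=\mathcal{S}_{\omega^\zeta}[\mathcal{F}_n]$ with Lemma~\ref{complexity}(ii) and the CB-product formula from \cite{Causey2017b} recalled in the excerpt to obtain by induction $CB(\mathcal{F}_n)=\omega^{\omega^\zeta n}+1$ for all $n\geq 0$. A transfinite induction on the CB derivatives of $\tilde K$ then gives $CB(\tilde K)=\alpha+1$: the point $0$ persists in $\tilde K^{(\beta)}$ for every $\beta<\alpha$ (for each such $\beta$ some level $K_n$ with $\omega^{\omega^\zeta n}>\beta$ still contributes $\emptyset\mapsto 0$ via $\psi_n$), whereas each single level $K_n$ is wiped out at stage $\omega^{\omega^\zeta n}+1$ and distinct levels do not interact (Step~1). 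This yields $\tilde K^{(\alpha)}=\{0\}$ and $\tilde K^{(\alpha+1)}=\emptyset$.

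The third step is the standard transfinite induction showing $s_\eps^\beta(\tilde K)\subset \tilde K^{(\beta)}$ for every $\eps>0$ and every ordinal $\beta$ (base case: if $\diam(\tilde K\cap U)\geq \eps>0$, then $\tilde K\cap U$ contains at least two points, so $x^*\in s_\eps^1(\tilde K)$ is a topological accumulation point). Hence $Sz(\tilde K)\leq CB(\tilde K)=\alpha+1$. Since $B_{\mathfrak{G}_\alpha^*}$ is the weak$^*$-closed absolutely convex hull of $\tilde K\cup(-\tilde K)$ (the norm on $\mathfrak{G}_\alpha$ takes absolute values on $M$), Theorem~$1.1$ of \cite{Causey2017a} gives $Sz(\mathfrak{G}_\alpha)=Sz(B_{\mathfrak{G}_\alpha^*})\leq\alpha+1$. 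Finally, the Szlenk index of any infinite-dimensional Banach space is of the form $\omega^\gamma$ (as recalled in the introduction), and $\omega^\gamma\leq\alpha+1=\omega^{\omega^\xi}+1$ forces $\omega^\gamma\leq\omega^{\omega^\xi}=\alpha$, yielding $Sz(\mathfrak{G}_\alpha)\leq\alpha$ as desired. The main obstacle is the bookkeeping in the second step: the point $0$ lies in every $K_n$ and accumulates from infinitely many levels, so one has to verify that it survives every CB derivation below $\alpha$ but is eliminated at stage $\alpha+1$; the harmless ``$+1$'' is then absorbed by the $\omega^\gamma$-form of Banach-space Szlenk indices.
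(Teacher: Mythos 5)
Your Steps 1--3 are essentially sound: $K$ is weak$^*$-compact, each $K_n$ is weak$^*$-homeomorphic to $\mathcal{F}_n$, distinct levels interact only at $0$, the recursion gives $CB(\mathcal{F}_n)=\omega^{\omega^\zeta n}+1$, hence $K^{(\alpha)}=\{0\}$, $CB(K)=\alpha+1$, and the standard containment $s_\eps^\beta(K)\subset K^{(\beta)}$ yields $Sz(K)\le \alpha+1$. The gap is in the final transfer. Theorem 1.1 of \cite{Causey2017a} propagates bounds of the form $\omega^\gamma$: from $Sz(K)\le\omega^\gamma$ one gets $Sz(\wsconv(K))\le\omega^\gamma$. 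Your bound $\alpha+1$ is not of this form, and the least power of $\omega$ dominating it is $\omega^{\omega^\xi+1}=\alpha\cdot\omega$, so the convex-hull theorem only yields $Sz(\mathfrak{G}_\alpha)\le\alpha\cdot\omega$; the observation that $Sz(\mathfrak{G}_\alpha)$ must be a power of $\omega$ then gives nothing better than $\alpha\cdot\omega$. The assertion that Theorem 1.1 gives $Sz(B_{\mathfrak{G}_\alpha^*})\le\alpha+1$ is unjustified, and it cannot be repaired by a better convex-hull theorem: passing to the weak$^*$-closed convex hull can genuinely raise the Szlenk index up to the next power of $\omega$ (for $C(K)$ with $K$ countable compact, the set of extreme points of the dual ball has Szlenk index at most $CB(K)<\omega^{\beta+1}$, while its weak$^*$-closed convex hull, the ball, has index exactly $\omega^{\beta+1}$). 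So losing the ``$+1$'' at the level of $K$ can cost a full factor of $\omega$ at the level of the ball, and your closing reduction does not close.

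What is missing is precisely the $\eps$-dependence that the Cantor--Bendixson index cannot detect, and this is what the paper's proof supplies. One has $Sz(K,\eps)\le\bigl(\sup_{n} Sz(K_n,\eps)\bigr)+1$, and for a fixed $\eps$ with $2^{-m}<\eps$ the levels $n>m$ do \emph{not} contribute their full Cantor--Bendixson complexity: writing $\mathcal{F}_n=\mathcal{F}_m[\mathcal{F}_{n-m}]$ and using Lemma \ref{fact}, a weak$^*$-convergent sequence $E_j=E\cup F_j$ with $F_j\in\mathcal{F}_{n-m}$ produces differences $2^{-n}\sum_{i\in F_j}x_i^*$ of norm at most $2^{-m}<\eps$, which are invisible to the $2\eps$-derivation; this forces $s_{2\eps}^{\omega^{\omega^\zeta m}+1}(K_n)=\emptyset$ for all $n>m$. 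Hence $\sup_n Sz(K_n,2\eps)\le \omega^{\omega^\zeta\lceil\log_2(1/\eps)\rceil}+1<\alpha$, and since $\alpha$ is a limit ordinal, $Sz(K,2\eps)<\alpha$ for every $\eps$, i.e.\ $Sz(K)\le\alpha$. This is the power-of-$\omega$ bound to which Theorem 1.1 of \cite{Causey2017a} can legitimately be applied. Your topological bookkeeping in Step 2 is not wasted --- it is essentially the ``small $n$'' half of the paper's argument --- but without the ``large $n$'' estimate the proof is incomplete.
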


\begin{proof} By \cite[Theorem $1.1$]{Causey2017a}, it is sufficient to prove that $Sz(K)\leq \alpha$, since $B_{\mathfrak{G}_\alpha^*}$ is the weak$^*$-closed, absolutely convex hull of $K$.

First, it is easy to see that for any $\eps>0$ and any ordinal $\eta$, $$s^\eta_\eps(K)\subset \{0\}\cup\bigcup_{n=0}^\infty s^\eta_\eps(K_n),$$ whence $$Sz(K, \eps)\leq \Bigl(\sup_{n\in \nn\cup \{0\}} Sz(K_n, \eps)\Bigr)+1.$$ Thus it suffices to show that $\sup_{n\in \nn\cup \{0\}} Sz(K_n, \eps)<\alpha$ for each $\eps>0$.

For a given $\eps>0$, we will provide an upper estimate for $Sz(K_n, 2\eps)$ in one of two ways, depending on whether $n$ is large or small relative to $\eps$.     The Cantor-Bendixson index of $K_n$ is an easy upper bound for $Sz(K_n, 2\eps)$, which is a good upper bound for small $n$.    We note that the map $\phi_n:\mathcal{F}_n\to K_n$ given by $\phi_n(E)=\sum_{i\in E} x^*_i$ is a homeomorphism from $\mathcal{F}_n$ to $K_n$, where $K_n$ is endowed with its weak$^*$ topology. From this it follows that for any $n\in \nn\cup \{0\}$ and any $\eps>0$,
$$Sz(K_n, \eps) \leq CB(K_n) =CB(\mathcal{F}_n).$$

We now turn to bounding $Sz(K_n, 2\eps)$ for large $n$.     Recall that $\xi=\zeta+1$ with $\zeta \in [0,\omega_1)$. We now prove that if $2^{-m}<\eps$, then for any $n> m$ and any ordinal $\eta$:
$$s^\eta_{2\eps}(K_n) \subset \Bigl\{2^{-n}\sum_{i\in E} x^*_i: E\in \mathcal{F}_m^\eta[\mathcal{F}_{n-m}]\Bigr\}.$$
The proof is by induction on $\eta$, with the base case following from the fact that for any $a,b\in \nn$, $\mathcal{F}_a[\mathcal{F}_b]=\mathcal{F}_{a+b}$. The limit ordinal case follows by taking intersections. Finally, assume we have the result for some $\eta$ and $$2^{-n}\sum_{i\in E} x^*_i\in s^{\eta+1}_{2\eps}(K_n),$$
so that the inductive hypothesis guarantees that $E\in \mathcal{F}_m^\eta[\mathcal{F}_{n-m}]$.   Then there exists a sequence
$$\big(2^{-n}\sum_{i\in E_j} x^*_i\big)_{j=1}^\infty\subset s^\eta_{2\eps}(K_n, \eps) \subset  \Bigl\{2^{-n}\sum_{i\in E} x^*_i: E\in \mathcal{F}_m^\eta[\mathcal{F}_{n-m}]\Bigr\}$$ converging weak$^*$ to $2^{-n}\sum_{j\in E} x^*_i$ and such that
$$\liminf_{j \to \infty}\Big\|2^{-n}\sum_{i\in E} x_i^*-2^{-n}\sum_{i\in E_j}x_i^*\Big\|_{\mathfrak{G}_\alpha^*}\ge \eps.$$
Of course, this means that $E_j\to E$ in $\mathcal{F}_n$ so that, after passing to another subsequence, we may assume $E_j=E\cup F_j$ for some $F_j\neq \emptyset$ with $E<F_j$.  Now since $E, E_j\in \mathcal{F}^\eta_m[\mathcal{F}_{n-m}]$ for each $j$, by Lemma \ref{fact}, either  $F_j\in \mathcal{F}_{n-m}$ or $E\in \mathcal{F}_m^{\eta+1}[\mathcal{F}_{n-m}]$. However, if $F_j\in \mathcal{F}_{n-m}$, then $2^{m-n}\sum_{i\in F_j} x_i^*\in B_{\mathfrak{G}_\alpha^*}$ and
$$\forall j \in \nn\ \ \Big\|2^{-n}\sum_{i\in E} x_i^*- 2^{-n}\sum_{i\in E_j} x^*_i\Big\|_{\mathfrak{G}_\alpha^*}= 2^{-m}\Big\|2^{m-n}\sum_{i\in F_j} x_i^*\Big\|_{\mathfrak{G}_\alpha^*} \leq 2^{-m}<\eps,$$
a contradiction. This concludes the successor case.\\
We now deduce from the inclusion we just proved, that
$$s^{\omega^{\omega^\zeta m}+1}_{2\eps}(K_n) \subset \Bigl\{2^{-n}\sum_{i\in E} x^*_i: E\in \mathcal{F}_m^{\omega^{\omega^\zeta m}+1}[\mathcal{F}_{n-m}]\Bigr\}=\emptyset.$$
So, we can now estimate
\begin{displaymath}
   Sz(K_n, 2\eps) \leq \left\{
     \begin{array}{lr}
       \omega^{\omega^\zeta n}+1 & : n\leq \log_2(1/\eps)\\
        \omega^{\omega^\zeta \lceil \log_2(1/\eps)\rceil}+1 & :n> \log_2(1/\eps),
     \end{array}
   \right.
\end{displaymath}
and this estimate finishes the proof of our proposition.

\end{proof}

\begin{proof}[Proof of Theorem \ref{Causey2} in the first case] Let $\alpha=\omega^{\omega^\xi}$, where $\xi$ is a countable successor ordinal and $\mathfrak{G}_\alpha$, $G_\alpha$ constructed as above.\\
Since the canonical basis $(x_i)_{i=1}^\infty$ of $\mathfrak{G}_\alpha$ is $1$-suppression unconditional, it is clear that $(e_i)_{i=1}^\infty$ is a $1$-suppression unconditional basis for $G_\alpha$. Proposition \ref{firstcase} insures that $Sz(\mathfrak{G}_\alpha)\leq\alpha$ and therefore that $\mathfrak{G}_\alpha$ does not contain $\ell_1$. It then follows from a classical result of R.C. James \cite{James1950} that $(x_i)_{i=1}^\infty$ is a shrinking basis of $\mathfrak{G}_\alpha$. Thus we can apply Proposition \ref{OSZreflexive} to get that $G_\alpha$ is reflexive and $Sz(G_\alpha^*)=\omega$.\\
We also deduce from Proposition \ref{tech9} that $Sz(G_\alpha)\leq Sz(\mathfrak{G}_\alpha)=\alpha$.

We now have to prove that $Sz(G_\alpha)\ge \alpha$. So let us write again $\alpha=\omega^{\omega^{\zeta+1}}$, with $\zeta \in [0,\omega_1)$.   Suppose  $n\in \nn$ and $E<F$ are such that $F\in \mathcal{F}_n$. Fix $k\in F\setminus E$.  Note that  $$2^{-n}\sum_{i\in F}e_i^* \in M_n$$
and
$$\Big\|2^{-n}\sum_{i\in E}e_i^* - 2^{-n}\sum_{i\in F} e_i^*\Big\|_{G_\alpha}\geq \Bigl|\Bigl(2^{-n}\sum_{i\in E}e_i^* - 2^{-n}\sum_{i\in F} e_i^*\Bigr)(e_k)\Bigr|=2^{-n},$$ since $\|e_k\|_{G_\alpha}=1$. From this and an easy induction argument, we see that for any $n\in \nn$, any $0\leq \mu<CB(\mathcal{F}_n)$ and any $E\in \mathcal{F}_n^\mu$, $2^{-n}\sum_{i\in E}e_i^*\in s_{2^{-n-1}}^\mu(B_{G_\alpha^*})$. Since $CB(\mathcal{F}_n)=(\omega^{\omega^\zeta})^n= \omega^{\omega^\zeta n}$, we deduce that $$Sz(G_\alpha)\geq \sup_{n\in \nn} \omega^{\omega^\zeta n}=\omega^{\omega^{\zeta+1}}=\alpha.$$
This finishes the proof and our construction for $\alpha=\omega^{\omega^\xi}$, with $\xi$ being a countable successor ordinal.

\end{proof}

\subsection{Second case: $\delta=\beta+\gamma$ for some $\beta, \gamma<\delta$}\

We will now modify slightly our construction in order to treat the case in which $\alpha=\omega^{\beta+\gamma}$, with $\omega^\beta<\alpha$ and $\omega^\gamma<\alpha$.  We have to consider two subcases.\\
First suppose $\gamma$ is a limit ordinal. We fix $\gamma_0=0$ and an increasing sequence $(\gamma_n)_{n=1}^\infty$ such that $\sup_{n\in \nn} \gamma_n=\gamma$. Then we set $$\mathcal{F}_0= \mathcal{S}_\beta\ \  \text{and}\ \ \mathcal{F}_n=\mathcal{S}_{\gamma_n}[\mathcal{S}_\beta],\ \text{for}\ n\in \nn.$$
If $\gamma=\zeta+1$ is a successor ordinal, we set
$$\mathcal{F}_0=\mathcal{S}_{\beta+\zeta} \  \text{and}\ \ \mathcal{F}_n=\mathcal{A}_n[\mathcal{S}_{\beta+\zeta}],\ \text{for}\ n\in \nn.$$
In either case, let
$$M_n=\Bigl\{2^{-n} \sum_{i\in E} e^*_i:  E \in \mathcal{F}_n \Bigr\}\ \text{for}\ n\in \{0\}\cup \nn\ \
\text{and}\ \ M=\bigcup_{n=0}^\infty M_n.$$
As in our first situation, we define $\mathfrak{G}_\alpha$ to be the completion of $c_{00}$ with respect to the norm $\|x\|_{\mathfrak{G}_\alpha}=\sup_{x^*\in M} |x^*(x)|$ and let $G_\alpha=\mathfrak{G}_\alpha^{\ell_2}$, where this construction is meant with respect to the canonical basis $(x_i)_{i=1}^\infty$ of $\mathfrak{G}_\alpha$. As previously, we define
$$K_n=\Bigl\{2^{-n} \sum_{i\in E} x^*_i:  E \in \mathcal{F}_n \Bigr\}\ \text{for}\ n\in \{0\}\cup \nn\ \
\text{and}\ \ K=\bigcup_{n=0}^\infty K_n.$$

\begin{proposition} Assume that $\alpha$ is a countable ordinal that can be written  $\alpha=\omega^{\beta+\gamma}$, with $\omega^\beta<\alpha$ and $\omega^\gamma<\alpha$. Then  $Sz(\mathfrak{G}_\alpha) \leq \alpha$.
\end{proposition}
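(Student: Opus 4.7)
My plan is to replicate the strategy from Proposition \ref{firstcase} with the new families $\mathcal{F}_n$. By \cite[Theorem $1.1$]{Causey2017a}, it suffices to bound $Sz(K)\leq \alpha$, and since $s^\eta_\eps(K)\subset \{0\}\cup\bigcup_n s^\eta_\eps(K_n)$, it is enough to show that, for each $\eps > 0$, $\sup_n Sz(K_n, 2\eps) < \alpha$.

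Fix $\eps > 0$ and $m \in \Ndb$ with $2^{-m} < \eps$. For $n \leq m$, I will invoke the weak$^*$-homeomorphism $\phi_n : \mathcal{F}_n \to K_n$, $E\mapsto 2^{-n}\sum_{i\in E} x_i^*$, to obtain $Sz(K_n, 2\eps) \leq CB(\mathcal{F}_n)$, which by Lemma \ref{complexity} equals $\omega^{\beta+\gamma_n}+1$ when $\gamma$ is limit and $\omega^{\beta+\zeta}n+1$ when $\gamma = \zeta+1$. For $n \leq m$ both are bounded above by a fixed ordinal strictly less than $\alpha = \omega^{\beta+\gamma}$.

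For $n > m$, writing $\mathcal{H}_n = \mathcal{S}_{\gamma_n}$ in the limit subcase and $\mathcal{H}_n = \mathcal{A}_n$ in the successor subcase, so that $\mathcal{F}_n = \mathcal{H}_n[\mathcal{F}_0]$, I will prove by transfinite induction on $\eta$ that
\[
s^\eta_{2\eps}(K_n)\subset \Bigl\{2^{-n}\textstyle\sum_{i \in E} x_i^* : E \in \mathcal{H}_n^\eta[\mathcal{F}_0]\Bigr\}.
\]
The base case is immediate. At a successor step, the standard weak$^*$-extraction yields $F_j \neq \emptyset$ with $E < F_j$, $E \cup F_j \in \mathcal{H}_n^\eta[\mathcal{F}_0]$, and $\|2^{-n}\sum_{i \in F_j} x_i^*\|_{\mathfrak{G}_\alpha^*} \geq \eps$; Lemma \ref{fact} then forces either $E \in \mathcal{H}_n^{\eta+1}[\mathcal{F}_0]$ or $F_j \in \mathcal{F}_0$, and the second alternative contradicts $\|2^{-n}\sum_{i \in F_j} x_i^*\|_{\mathfrak{G}_\alpha^*} \leq 2^{-n} \leq 2^{-m} < \eps$ (since $\sum_{i \in F_j} x_i^* \in K_0 \subset B_{\mathfrak{G}_\alpha^*}$). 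Setting $\eta = CB(\mathcal{H}_n)$ gives $Sz(K_n, 2\eps) \leq CB(\mathcal{H}_n)$, equal to $\omega^{\gamma_n}+1$ (subcase A) or $n+1$ (subcase B).

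Consequently $\sup_{n > m} Sz(K_n, 2\eps) \leq \omega^\gamma$ in subcase A (using $\gamma_n \nearrow \gamma$) and $\leq \omega$ in subcase B; the assumption $\beta,\gamma < \delta = \beta+\gamma$ forces $\beta \geq 1$, so both bounds are strictly less than $\alpha$. Combining with the small-$n$ estimate gives $\sup_n Sz(K_n, 2\eps) < \alpha$, whence $Sz(K, 2\eps) < \alpha$ for every $\eps$, and therefore $Sz(K) \leq \alpha$. The most delicate point I anticipate is the limit-ordinal case of the induction, where one must verify that $\bigcap_{\rho < \eta} \mathcal{H}_n^\rho[\mathcal{F}_0] \subset \mathcal{H}_n^\eta[\mathcal{F}_0]$; this should follow from the hereditary and spreading properties of the iterated Cantor--Bendixson derivatives together with a careful analysis of the bracket operation, but the interchange of the intersection with the bracket always merits attention.
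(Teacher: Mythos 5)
Your proof follows the paper's argument exactly: the same reduction to $\sup_n Sz(K_n,2\eps)<\alpha$, the same Cantor--Bendixson bound $Sz(K_n,2\eps)\le CB(\mathcal{F}_n)$ for small $n$, and for large $n$ the same derived-set containment $s^\eta_{2\eps}(K_n)\subset\{2^{-n}\sum_{i\in E}x_i^*:E\in\mathcal{H}_n^\eta[\mathcal{F}_0]\}$ via Lemma \ref{fact}, which the paper states but declines to prove in detail ("even easier than the first case"); you actually supply the successor step correctly. The limit-ordinal worry you flag is harmless and is glossed over in the paper's first case as well: a finite set $E$ has only finitely many decompositions into successive nonempty blocks, so if $E\in\mathcal{H}_n^\rho[\mathcal{F}_0]$ for every $\rho<\eta$, one fixed decomposition must witness this for cofinally many $\rho$, and since the derived sets decrease it witnesses $E\in\mathcal{H}_n^\eta[\mathcal{F}_0]$.
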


\begin{proof} Again, it is sufficient to show that $Sz(K)\leq \alpha$. Arguing as in Proposition \ref{first prop}, we first note that for any $\eps>0$ and $n\in \nn$,

 \begin{displaymath}
   Sz(K_n, \eps) \leq CB(\mathcal{F}_n)  = \left\{
     \begin{array}{lr}
       \omega^{\beta+\gamma_n}+1 & : \gamma \text{\ a limit}\\
       \omega^{\beta+\mu}n +1 & :  \gamma=\zeta+1.
     \end{array}
   \right.
\end{displaymath}
Now for $n\in \nn$ and $\eps>0$ such that  $2^{-n}<\eps$, we claim that for any ordinal $\eta$,  \begin{displaymath}
   s^\eta_{2\eps}(K_n) \subset  \left\{
     \begin{array}{lr}
       \Bigl\{2^{-n}\sum_{i\in E} x^*_i: E\in \mathcal{S}^\eta_{\gamma_n}[\mathcal{S}_\beta]\Bigr\} & : \gamma \text{\ a limit}\\
       \Bigl\{2^{-n}\sum_{i\in E} x^*_i: E\in \mathcal{A}_n^\eta[\mathcal{S}_{\beta+\zeta}]\Bigr\} & :  \gamma=\zeta+1.
     \end{array}
   \right.
\end{displaymath}
The proof is even easier than the analogous claim in the proof of the first case, so we omit it. Note that in particular, when $\gamma$ is a limit ordinal and $2^{-n}<\eps$, $\mathcal{S}_{\gamma_n}^{\omega^\gamma}=\emptyset$, whence the previous claim yields the estimate $Sz(K_n, 2\eps)\leq \omega^\gamma<\omega^{\beta+\gamma}$ when $2^{-n}<\eps$. Similarly, since $\mathcal{A}_n^\omega=\emptyset$, $Sz(K_n, 2\eps)\leq \omega<\omega^{\beta+\zeta+1}$ when $2^{-n}<\eps$.\\
Therefore for $n\leq \log_2(1/\eps)$, \begin{displaymath}
   Sz(K_n, 2\eps) \leq CB(\mathcal{F}_n)  = \left\{
     \begin{array}{lr}
       \omega^{\beta+\gamma_n}+1 & : \gamma \text{\ a limit}\\
       \omega^{\beta+\mu}n +1 & :  \gamma=\zeta+1,
     \end{array}
   \right.
\end{displaymath}
and for $n>\log_2(1/\eps)$, \begin{displaymath}
   Sz(K_n, 2\eps) \leq  \left\{
     \begin{array}{lr}
       \omega^{\gamma} & : \gamma \text{\ a limit}\\
       \omega  & :  \gamma=\zeta+1,
     \end{array}
   \right.
\end{displaymath}
Thus in either case, for every $\eps>0$, $\sup_{n\in \nn\cup \{0\}} Sz(K_n, \eps)<\alpha$, yielding the result.

\end{proof}

\begin{proof}[Proof of Theorem \ref{Causey2} in the second case]
The end of the proof is the same as for the first case, only noting that $CB(\mathcal{F}_n)=\omega^{\beta+\gamma_n}+1$ when $\gamma$ is a limit ordinal and $CB(\mathcal{F}_n)=\omega^{\beta+\zeta}n+1$ if $\gamma=\zeta+1$.

\end{proof}

\medskip\noindent{\bf Acknowledgements.} This work was initiated while the second named author was visiting  Miami University in Oxford, Ohio. He wishes to thank the Mathematics Department of Miami University for this invitation and its hospitality.\\
The authors are grateful to the anonymous referee, whose valuable comments and suggestions for simplification helped to improve the presentation of this paper.

\end{document}